\documentclass[a4paper,10pt]{amsart}

\usepackage{amsmath,amsfonts,amssymb}
\usepackage[all]{xy}
\usepackage[english]{babel}
\usepackage[utf8x]{inputenc}
\usepackage{graphicx}
\usepackage{multirow}
\usepackage{array}
\usepackage{booktabs}
\usepackage{ifthen}
\usepackage{color}

\usepackage{fullpage}

\newtheorem{prop}{Proposition}[section]
\newtheorem{thm}[prop]{Theorem}
\newtheorem{cor}[prop]{Corollary}
\newtheorem{lemma}[prop]{Lemma}
\theoremstyle{definition}

\newtheorem{rem}[prop]{Remark}
\newtheorem{ex}[prop]{Example}

\newcommand{\kth}[1]{\ifthenelse{\equal{#1}{1}}{$#1^\text{st}$}{\ifthenelse{\equal{#1}{2}}{$#1^\text{nd}$}{\ifthenelse{\equal{#1}{3}}{$#1^\text{rd}$}{$#1^\text{th}$}}}}

\newcommand{\N}{\mathbb{N}}

\newcommand{\Q}{\mathbb{Q}}
\newcommand{\R}{\mathbb{R}}
\newcommand{\C}{\mathbb{C}}

\newcommand{\st}{\;:\;}

\newcommand{\correnti}{\mathcal{D}}

\newcommand{\sspace}{\cdot}
\newcommand{\ssspace}{\cdot\cdot}

\newcommand{\duale}[1] {{{#1}^{*}}}

\newcommand{\Prim}{{\mathrm{P}\wedge}}
\newcommand{\Primcorrenti}{{\mathrm{P}\correnti}}
\newcommand{\PH}{\mathrm{P}H}

\DeclareMathOperator{\im}{i}
\DeclareMathOperator{\imm}{im}

\DeclareMathOperator{\de}{d}
\DeclareMathOperator{\id}{id}
\DeclareMathOperator{\Vol}{Vol}

\newcommand{\del}{\partial}
\newcommand{\delbar}{\overline{\del}}


%
%

\title[Symplectic manifolds and cohomological decomposition]{Symplectic manifolds and cohomological decomposition}

\author{Daniele Angella}
\address[Daniele Angella]{Dipartimento di Matematica\\
Universit\`{a} di Pisa \\
Largo Bruno Pontecorvo 5, 56127\\
Pisa, Italy}
\email{angella@mail.dm.unipi.it}

\author{Adriano Tomassini}
\address[Adriano Tomassini]{Dipartimento Di Matematica e Informatica\\
Universit\`{a} di Parma \\
Parco Area delle Scienze 53/A, 43124 \\
Parma, Italy}
\email{adriano.tomassini@unipr.it}

\keywords{symplectic manifolds, Lefschetz decomposition, cohomology, Hard Lefschetz Condition, solvmanifolds}
\thanks{This work was supported by GNSAGA of INdAM}
\subjclass[2010]{53D05, 58A12, 57T15}


\begin{document}

\begin{abstract}
 Given a closed symplectic manifold, we study when the Lefschetz decomposition induced by the $\mathfrak{sl}(2;\R)$-representation yields a decomposition of the de Rham cohomology. In particular, this holds always true for the second de Rham cohomology group, or if the symplectic manifold satisfies the Hard Lefschetz Condition.
\end{abstract}

\maketitle

\section*{Introduction}

Compact K\"ahler manifolds have special cohomological properties: from the complex point of view, the Hodge decomposition theorem states that the complex de Rham cohomology groups decompose as direct sum of the Dolbeault cohomology groups, and from the symplectic side the Hard Lefschetz theorem provides a decomposition of the de Rham cohomology as direct sum of primitive cohomology groups. Such decompositions do not hold anymore for general non-K\"ahler complex manifolds.

\medskip

To the purpose of generalizing the above cohomological complex-type decomposition on an arbitrary almost-complex manifold $\left(X,\, J\right)$, T.-J. Li and W. Zhang have introduced in \cite{li-zhang} the subgroups $H^{(p,q),(q,p)}_J(X;\R)\subseteq H^\bullet_{dR}(X;\R)$ (respectively, $H^{(p,q)}_J(X)\subseteq H^\bullet_{dR}(X;\C)$), formed by the real (respectively, complex) de Rham cohomology classes having representatives of pure type $(p,q)$ (we refer also to \cite{angella-tomassini, angella-tomassini-2, fino-tomassini} for further results concerning these subgroups). In \cite{draghici-li-zhang}, T. Dr\v{a}ghici, T.-J. Li and W. Zhang have proved that any closed $4$-dimensional manifold endowed with an almost-complex structure $J$ satisfies the decomposition $H^2_{dR}(X;\R) = H^{(1,1)}_J(X;\R) \oplus H^{(2,0),(0,2)}_J(X;\R)$, which can be regarded as a Hodge decomposition for non-K\"ahler $4$-manifolds. This decomposition does not hold true in higher dimension, see \cite[Example 3.3]{fino-tomassini}.

In \cite{brylinski}, J.-L. Brylinski proposed a Hodge theory for closed symplectic manifolds $\left(X,\,\omega\right)$: in this context, O. Mathieu in \cite{mathieu}, and D. Yan in \cite{yan}, proved that any de Rham cohomology class admits a symplectically harmonic representative (i.e., a form being both $\de$-closed and $\de^\Lambda$-closed, where $\de^\Lambda\lfloor_{\wedge^kX}:=(-1)^{k+1}\,\star_\omega\de\star_\omega$, and $\star_\omega$ is the symplectic-$\star$-operator) if and only if the Hard Lefschetz Condition is satisfied.\\
Recently, L.-S. Tseng and S.-T. Yau, in \cite{tseng-yau-1, tseng-yau-2} (see also \cite{tseng-yau-3}), have introduced new cohomologies for symplectic manifolds $\left(X,\,\omega\right)$: among them, in particular, they have defined and studied
$$ H^\bullet_{\de+\de^\Lambda}(X;\R) \;:=\; \frac{\ker\left(\de+\de^\Lambda\right)}{\imm\de\de^\Lambda} \;,$$
developing a Hodge theory for such cohomology; $H^\bullet_{\de+\de^\Lambda}(X;\R)$ can be interpreted as the symplectic counterpart to the Bott-Chern cohomology of a complex manifold, see \cite{tseng-yau-3}. (As regards the Bott-Chern cohomology and its relation with the cohomological properties of a compact complex manifold, we refer to \cite{angella-tomassini-3}, where the problem whether the Bott-Chern cohomology groups induce a decomposition of the de Rham cohomology is studied, and a characterization of compact complex manifolds satisfying the $\del\delbar$-Lemma is given.)
Furthermore, they have studied the dual currents of Lagrangian and co-isotropic submanifolds, and they have defined a homology theory on co-isotropic chains, which turns out to be naturally dual to a primitive cohomology.\\
Inspired also by their work, Y. Lin has developed in \cite{lin} a new approach to the symplectic Hodge theory, proving in particular that, on any closed symplectic manifold satisfying the Hard Lefschetz Condition, there is a Poincaré duality between the primitive homology on co-isotropic chains and the primitive cohomology.

\medskip

In the present paper, we focus on cohomological properties of closed symplectic manifolds $\left(X,\,\omega\right)$. Denote by
$$ H^{(r,s)}_\omega(X;\R) \;:=\; \left\{\left[L^r\,\beta^{(s)}\right]\in H^{2r+s}_{dR}(X;\R) \st \beta^{(s)} \text{ is a primitive }s\text{-form} \right\} \;\subseteq\; H^\bullet_{dR}(X;\R) \;, $$
and by
$$ H_{(r,s)}^\omega(X;\R) \;:=\; \left\{\left[L^r\,\beta_{(s)}\right]\in H_{-2r+s}^{dR}(X;\R) \st \beta_{(s)} \text{ is a primitive }s\text{-current} \right\} \;\subseteq\; H_\bullet^{dR}(X;\R) \;, $$
where $L\colon \wedge^\bullet X \to \wedge^{\bullet+2}X$ is defined by $L\alpha:=\omega\wedge\alpha$, and $L\colon \correnti_\bullet X \to \correnti_{\bullet-2}X$ is induced by duality, where $\correnti_\bullet X$ denotes the complex of currents on $X$.\\
We are concerned in studying when the above subgroups yield a direct sum decomposition of the de Rham cohomology, respectively, of the de Rham homology. In this matter, we prove the following result, which can be regarded as the symplectic counterpart to \cite[Theorem 2.3]{draghici-li-zhang} by T. Dr\v{a}ghici, T.-J. Li and W. Zhang in the complex case.

\smallskip
\noindent {\bfseries Theorem \ref{thm:sympl-decomp-H2}.\ }{\itshape
 Let $X$ be a closed manifold endowed with a symplectic structure $\omega$. Then
 \begin{eqnarray*}
  H^2_{dR}(X;\R) &=& H^{(1,0)}_\omega(X;\R) \oplus H^{(0,2)}_\omega(X;\R) \;.
 \end{eqnarray*}
 In particular, if $\dim X=4$, then
 $$ H^{\bullet}_{dR}(X;\R) \;=\; \bigoplus_{r\in\N} H^{(r,\bullet-2r)}_\omega(X;\R) $$
 and
 $$ H_{\bullet}^{dR}(X;\R) \;=\; \bigoplus_{r\in\N} H_{(r,\bullet+2r)}^\omega(X;\R) \;.$$
}
\smallskip

If $\left( X, \, \omega \right)$ satisfies the Hard Lefschetz Condition, then $H^\bullet_{dR}(X;\R)$, respectively $H_\bullet^{dR}(X;\R)$, decomposes as direct sum of the subgroups $H^{(\bullet,\bullet)}_{\omega}(X;\R)$, respectively $H_{(\bullet,\bullet)}^{\omega}(X;\R)$, see Corollary \ref{cor:hlc}.\\
Then we specialize on \emph{solvmanifolds}, namely, compact quotients of solvable Lie groups, showing a Nomizu theorem for solvmanifolds of completely-solvable type, see Proposition \ref{prop:linear-cpf-invariant-cpf}.

\medskip

The paper is organized as follows. In Section \ref{sec:preliminaries}, we recall the basic facts concerning Lefschetz decomposition (both for differential forms, for currents, and for cohomologies) and Hodge theory on closed symplectic manifolds, in particular with the aim to fix the notations. In Section \ref{sec:cohomology}, we introduce and study the subgroups $H^{(\bullet,\bullet)}_\omega(X;\R)$, proving the symplectic cohomological decomposition in Theorem \ref{thm:sympl-decomp-H2}. In Section \ref{sec:solvmanifolds}, we study symplectic cohomology decomposition for solvmanifolds, providing several explicit examples and computing the symplectic cohomology groups in such cases.

\medskip

\noindent{\sl Acknowledgments.} The authors would like to thank Yi Lin for pointing them the reference \cite{lin}, and for useful suggestions.

\section{Preliminaries on Hodge theory for symplectic manifolds}\label{sec:preliminaries}

We recall here some notions and results concerning Hodge theory for symplectic manifolds, referring to \cite{brylinski, mathieu, yan, cavalcanti, tseng-yau-1, tseng-yau-2, lin}.

\subsection{Primitive forms and Lefschetz decomposition}
Let $\left(V,\, \omega\right)$ be a $2n$-dimensional symplectic vector space and denote by $\{e_1,\ldots ,e_{2n}\}$ a Darboux basis of $V$ for $\omega$, i.e., $\omega =\sum_{i=1}^n e^i\wedge e^{n+i}$, where $\{e^1,\ldots ,e^{2n}\}$ is the dual basis of $\{e_1,\ldots ,e_{2n}\}$. Denote by $I:V\to V^*$ the natural isomorphism induced by $\omega$, namely $I(v)(\cdot)=\omega(v,\cdot)$, for every $v\in V$. Then $\omega$ gives rise to a bilinear form on $\wedge^k \duale{V}$, denoted by $\left(\omega^{-1}\right)^k$, which is skew-symmetric, respectively symmetric, according that $k$ is odd, respectively even, and defined on the simple elements as
$$
\left(\omega^{-1}\right)^k\left(\alpha^1\wedge\ldots \wedge \alpha^k,\beta^1\wedge\ldots \wedge \beta^k\right) \;:=\; \det\left(\omega^{-1}\left(\alpha^i,\beta^j\right)\right)_{i,j\in\{1,\ldots,k\}} \;,
$$
where $\omega^{-1}\left(\alpha^i,\beta^j\right) \;:=\; \omega\left(I^{-1}\left(\alpha^i\right),I^{-1}\left(\beta^j\right)\right)$. The \emph{symplectic-$\star$-operator}
$$
\star_\omega\colon \wedge^\bullet X\to \wedge^{2n-\bullet}X
$$
is defined requiring that, for every $\alpha,\,\beta\in\wedge^k\duale{V}$,
$$ \alpha\wedge\star_\omega \beta \;=\; \left(\omega^{-1}\right)^k\left(\alpha,\beta\right)\,\omega^n \;,$$
see \cite[\S2]{brylinski}.

\medskip

Let $X$ be a $2n$-dimensional closed manifold and let $\omega$ be a \emph{symplectic} structure on $X$ (namely, a non-degenerate $\de$-closed $2$-form on $X$). Set $\Pi:=\omega^{-1}\in\wedge^2TX$ the canonical Poisson bi-vector associated to $\omega$, namely, in a Darboux chart with local coordinates $\left\{x^1,\ldots,x^n,y^1,\ldots,y^n\right\}$, if $\omega\stackrel{\text{loc}}{=}\sum_{j=1}^{n}\de x^j\wedge \de y^j$, then $\omega^{-1}\stackrel{\text{loc}}{=}\sum_{j=1}^{n}\frac{\del}{\del x^j}\wedge\frac{\del}{\del y^j}$. Consider the $\mathfrak{sl}(2;\R)$-representation on $\wedge^\bullet X$ given by $\left\langle L,\, \Lambda,\, H \right\rangle\subset \mathrm{End}^\bullet\left(\wedge^\bullet X\right)$, where
\begin{eqnarray*}
L\colon \wedge^\bullet X\to \wedge^{\bullet+2}X\;, \quad && \alpha\mapsto \omega\wedge\alpha \;,\\[5pt]
\Lambda\colon \wedge^\bullet X\to \wedge^{\bullet-2}X\;, \quad && \alpha \mapsto -\iota_\Pi\alpha \;,\\[5pt]
H\colon \wedge^\bullet X\to \wedge^\bullet X\;, \quad && \alpha\mapsto \sum_k \left(n-k\right)\,\pi_{\wedge^kX}\alpha \;,
\end{eqnarray*}
(we denote the interior product with $\xi\in\wedge^2\left(TX\right)$ by $\iota_{\xi}\colon\wedge^{\bullet}X\to\wedge^{\bullet-2}X$, and, for $k\in\N$, the natural projection by $\pi_{\wedge^kX}\colon\wedge^\bullet X\to\wedge^kX$). Using the symplectic-$\star$-operator $\star_\omega$, one can write
$$
\Lambda \;=\; -\star_\omega\,L\,\star_\omega \;.
$$
The above $\mathfrak{sl}(2;\R)$-representation, having finite $H$-spectrum, induces the \emph{Lefschetz decomposition} on differential forms, \cite[Corollary 2.6]{yan},
$$ \wedge^\bullet X \;=\; \bigoplus_{r\in\N} L^r \, \Prim^{\bullet-2r}X \;, $$
where
$$ \Prim^\bullet X \;:=\; \ker\Lambda $$
is the space of \emph{primitive forms}. Note (see, e.g., \cite[Proposition 1.2.30(v)]{huybrechts}) that, for every $k\in\N$,
$$ \Prim^kX \;=\; \ker L^{n-k+1}\lfloor_{\wedge^kX} \;.$$
In general, see \cite[pages 7-8]{tseng-yau-2}, the Lefschetz decomposition of $A^{(k)}\in\wedge^kX$ reads as
$$ A^{(k)} \;=\; \sum_{r\geq \max\left\{k-n,\,0\right\}}\frac{1}{r!}\, L^r\, B^{(k-2r)} $$
where, for $r\geq\max\left\{k-n,\,0\right\}$,
$$ B^{(k-2r)} \;:=\; \left(\sum_{\ell\in\N}a_{r,\ell,(n,k)}\,\frac{1}{\ell!}\, L^\ell\, \Lambda^{r+\ell}\right)\, A^{(k)} \;\in\; \Prim^{k-2r}X $$
and, for $r\geq\max\left\{k-n,\,0\right\}$ and $\ell\in\N$,
$$ a_{r,\ell,(n,k)} \;:=\; \left(-1\right)^{\ell} \cdot \left(n-k+2r+1\right)^2 \cdot \prod_{i=0}^{r}\frac{1}{n-k+2r+1-i} \cdot \prod_{j=0}^{\ell}\frac{1}{n-k+2r+1+j} \in \Q \;. $$

\medskip

We recall that
$$ L\lfloor_{\bigoplus_{k=-1}^{n-2}\wedge^{n-k-2}X} \colon \bigoplus_{k=-1}^{n-2}\wedge^{n-k-2}X \to \wedge^{n-k}X $$
is injective, see \cite[Corollary 2.8]{yan}, and that, for every $k\in\N$,
$$ L^k\colon \wedge^{n-k}X\to\wedge^{n+k}X $$
is an isomorphism, see \cite[Corollary 2.7]{yan}.

\subsection{Symplectic cohomologies}
Set
$$
 \de^\Lambda\lfloor_{\wedge^kX} \;:=\; (-1)^{k+1}\star_\omega \de \star_\omega
$$
for every $k\in \N$. Then the following basic symplectic identity holds (see, e.g., \cite[Corollary 1.3]{yan}):
\begin{equation}\label{basic-symplectic-identity}
\left[\de,\,\Lambda\right]\;=\;\de^\Lambda \;.
\end{equation}
As a direct consequence of \eqref{basic-symplectic-identity}, one gets $\de\de^\Lambda +\de^\Lambda\de =0$.\newline
Note also that, if $\left(J,\,\omega,\,g\right)$ is an almost-K\"ahler structure on $X$, then the symplectic-$\star$-operator $\star_\omega$ and the Hodge-$*$-operator $*_g$ are related by $\star_\omega=J\,*_g$, and hence $\de^\Lambda=-\left(\de^c\right)^{*_g}$ where $\de^c:=-\im\left(\del-\delbar\right)$.)

\medskip

Being $\left(\de^\Lambda\right)^2=0$, one can consider, as in \cite[\S1]{brylinski} and \cite[\S3.1]{tseng-yau-1}, the following cohomology:
$$ H^\bullet_{\de^\Lambda}(X;\R) \;:=\; \frac{\ker \de^\Lambda}{\imm\de^\Lambda} \;, $$
which is isomorphic to the de Rham cohomology, since
$$ \star_\omega\colon H^\bullet_{dR}(X;\R) \stackrel{\simeq}{\longrightarrow} H^{2n-\bullet}_{\de^\Lambda}(X;\R) \;,$$
by \cite[Corollary 2.2.2]{brylinski}.

\medskip

In \cite{tseng-yau-1}, looking for a symplectic counterpart to the Aeppli and Bott-Chern cohomologies of complex manifolds (see \cite{tseng-yau-3} for further discussions), L.-S. Tseng and S.-T. Yau introduce also the \emph{$\left(\de+\de^\Lambda\right)$-cohomology}, \cite[\S3.2]{tseng-yau-1},
$$ H^\bullet_{\de+\de^\Lambda} \;:=\; \frac{\ker\left(\de+\de^\Lambda\right)}{\imm\de\de^\Lambda} \;, $$
and the \emph{$\left(\de\de^\Lambda\right)$-cohomology}, \cite[\S3.3]{tseng-yau-1},
$$ H^\bullet_{\de\de^\Lambda} \;:=\; \frac{\ker \de\de^\Lambda}{\imm\de+\imm\de^\Lambda} \;, $$
proving in \cite[Corollary 3.6, Corollary 3.17]{tseng-yau-1} that, being $X$ compact, they are finite-dimensional $\R$-vector spaces, since, once fixed an almost-K\"ahler structure $\left(J,\, \omega,\, g\right)$ on $X$, they are isomorphic to the kernel of certain \kth{4}-order self-adjoint elliptic differential operators, see \cite[Theorem 3.5, Theorem 3.16]{tseng-yau-1};
furthermore, the Hodge-$*$-operator with respect to $g$ induces
$$ *\colon H^\bullet_{\de+\de^\Lambda}(X;\R) \stackrel{\simeq}{\longrightarrow} H^{2n-\bullet}_{\de\de^\Lambda}(X;\R) \;,$$
see \cite[Corollary 3.25]{tseng-yau-1}.\\
Moreover, it is proven in \cite[Proposition 2.8]{tseng-yau-1} that the cohomology $H^\bullet_{\de+\de^\Lambda}(X;\R)$ is invariant under symplectomorphism and Hamiltonian isotopy.

\medskip

The following commutation relations between the differential operators $\de$, $\de^\Lambda$, and $\de\de^\Lambda$, and the $\mathfrak{sl}(2;\R)$-module generators $L$, $\Lambda$, and $H$, hold straightforwardly, \cite[Lemma 2.3]{tseng-yau-1}:
$$
\begin{array}{rclrclrcl}
\left[\de,\,L\right] &=& 0 \;, & \left[\de^\Lambda,\,L\right] &=& -\de \;, & \left[\de\de^\Lambda,\,L\right] &=& 0 \;, \\[5pt]
\left[\de,\,\Lambda\right] &=:& \de^\Lambda \;, & \left[\de^\Lambda,\,\Lambda\right] &=& 0 \;, & \left[\de\de^\Lambda,\,\Lambda\right] &=& 0 \;,\\[5pt]
\left[\de,\,H\right] &=& \de \;, & \left[\de^\Lambda,\,H\right] &=& -\de^\Lambda \;, & \left[\de\de^\Lambda,\,H\right] &=& 0 \;.
\end{array}
$$
Hence, setting
$$ \PH^\bullet_{\de+\de^\Lambda}(X;\R) \;:=\; \frac{\ker\de\cap\ker\de^\Lambda\cap\Prim^\bullet X}{\imm\de\de^\Lambda\cap\Prim^\bullet X} \;=\; \frac{\ker\de\cap\Prim^\bullet X}{\de\de^\Lambda\left(\Prim^\bullet X\right)} $$
(see \cite[Lemma 3.9]{tseng-yau-1}), one gets that
$$ H^\bullet_{\de+\de^\Lambda}(X;\R) \;=\; \bigoplus_{r\in\N} L^r \, \PH^{\bullet-2r}_{\de+\de^\Lambda}(X;\R) $$
and, for every $k\in\N$,
$$ L^k\colon H^{n-k}_{\de+\de^\Lambda}(X;\R) \stackrel{\simeq}{\longrightarrow} H^{n+k}_{\de+\de^\Lambda}(X;\R) \;,$$
see \cite[Theorem 3.11]{tseng-yau-1}.

\medskip

\subsection{Hard Lefschetz condition}
The identity map induces the following natural maps in cohomology:
$$
\xymatrix{
  & H_{\de+\de^\Lambda}^{\bullet}(X;\R) \ar[ld]\ar[rd] &   \\
 H_{dR}^{\bullet}(X;\R) \ar[rd] &  & H_{\de^\Lambda}^{\bullet}(X;\R) \ar[ld] \\
  & H_{\de\de^\Lambda}^{\bullet}(X;\R) &
}
$$

Recall that a symplectic manifold is said to satisfy the \emph{$\de\de^\Lambda$-Lemma} if every $\de$-exact, $\de^\Lambda$-closed form is $\de\de^\Lambda$-exact, namely, if $H_{\de+\de^\Lambda}^{\bullet}(X;\R)\to H_{dR}^{\bullet}(X;\R)$ is injective. Furthermore, one says that the \emph{Hard Lefschetz Condition} holds on $X$ if
\begin{equation}\label{eq:hlc}
\tag{HLC}
 \text{for every } k\in\N\;, \qquad L^k\colon H^{n-k}_{dR}(X;\R) \stackrel{\simeq}{\longrightarrow} H^{n+k}_{dR}(X;\R) \;.
\end{equation}

\medskip

In fact, by \cite[Corollary 2]{mathieu}, \cite[Theorem 0.1]{yan}, \cite[Proposition 1.4]{merkulov}, \cite{guillemin}, \cite[Theorem 5.4]{cavalcanti}, (compare also \cite{deligne-griffiths-morgan-sullivan}), it turns out that the following conditions are equivalent:
 \begin{itemize}
  \item $X$ satisfies the $\de\de^\Lambda$-Lemma;
  \item the natural homomorphism $H_{\de+\de^\Lambda}^{\bullet}(X;\R)\to H_{dR}^{\bullet}(X;\R)$ is actually an isomorphism;
  \item every de Rham cohomology class admits a representative being both $\de$-closed and $\de^\Lambda$-closed (i.e., Brylinski's conjecture \cite[Conjecture 2.2.7]{brylinski} holds on $X$);
  \item the Hard Lefschetz Condition holds on $X$.
 \end{itemize}

\medskip

\subsection{Primitive currents}
Denote by $\correnti_\bullet X:=:\correnti^{2n-\bullet}X$ the space of currents (that is, the topological dual space of $\wedge^\bullet X$, endowed with the usual topology, see, e.g., \cite[\S9]{derham}). The differential $\de\colon \correnti_\bullet X \to \correnti_{\bullet-1}X$ is defined by duality from $\de\colon \wedge^{\bullet-1}X \to \wedge^{\bullet}X$. Hence, one can consider the \emph{de Rham homology} $H_\bullet^{dR}(X;\R)$ as the homology of the complex $\left(\correnti_\bullet X,\,\de\right)$. One has a natural injective homomorphism given by
$$
\begin{array}{rclcl}
 T_\sspace &\colon& \wedge^\bullet X &\to    & \correnti^\bullet X \;,\\[5pt]
           &      & \alpha           &\mapsto& \int_X \alpha\wedge\sspace \;;
\end{array}
$$
since $\de T_{\sspace}=T_{\de\sspace}$, the homomorphism $T_\sspace$ induces a map
$$ T_\sspace\colon H^\bullet_{dR}(X;\R) \to H_\bullet^{dR}(X;\R) \;. $$
Moreover, see \cite[Theorem 14]{derham}, one has an isomorphism
$$ H^\bullet_{dR}(X;\R) \;\simeq\; H_\bullet^{dR}(X;\R) \;;$$
in particular, $T_\sspace \colon H^\bullet_{dR}(X;\R)\to H_\bullet^{dR}(X;\R)$ is an isomorphism.

\medskip

Following \cite[Definition 5.1]{lin}, set, by duality,
\begin{eqnarray*}
L\colon \correnti_\bullet X\to \correnti_{\bullet-2}X\;, \quad && S \mapsto S\left(L\, \sspace\right) \;,\\[5pt]
\Lambda\colon \correnti_\bullet X\to \correnti_{\bullet+2}X\;, \quad && S \mapsto S\left(\Lambda\, \sspace\right) \;,\\[5pt]
H\colon \correnti_\bullet X\to \correnti_\bullet X\;, \quad && S \mapsto S\left(-H\, \sspace\right) \;,
\end{eqnarray*}

A current $S\in \correnti^k X$ is said \emph{primitive} if $\Lambda S=0$, equivalently, if $L^{n-k+1}S=0$ (see, e.g., \cite[Proposition 5.3]{lin}); denote by $\Primcorrenti^\bullet X:=:\Primcorrenti_{2n-\bullet} X$ the space of primitive currents on $X$.

The following results are proven by Y. Lin in \cite{lin} and provide a Lefschetz decomposition also on the space of currents, respectively on the space of flat currents.

\begin{prop}[{\cite[Lemma 5.2, Lemma 5.12]{lin}}]
 Let $X$ be a closed manifold endowed with a symplectic structure. Then
 \begin{itemize}
  \item $\left\langle L,\, \Lambda,\, H\right\rangle$ gives a $\mathfrak{sl}(2;\R)$-module structure on $\correnti^\bullet X$;
  \item $\left\langle L,\, \Lambda,\, H\right\rangle$ gives a $\mathfrak{sl}(2;\R)$-module structure on the space of flat currents.
 \end{itemize}
\end{prop}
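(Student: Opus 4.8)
The statement splits into two assertions, and the plan is to treat them in turn. For the first, note that the three operators on currents are, by construction, the transposes of the $\mathfrak{sl}(2;\R)$-generators on forms: the current operators $L$, $\Lambda$, $H$ equal $\trasposta L$, $\trasposta\Lambda$ and $-\trasposta H$ respectively (the minus sign being exactly the convention $S\mapsto S(-H\,\sspace)$). Since the representation on $\wedge^\bullet X$ is already available from Yan's Lefschetz decomposition, I would deduce the commutation relations on $\correnti^\bullet X$ by a purely formal duality computation. For the second assertion it then suffices, once the relations are known on all currents, to check that these operators preserve the subspace of flat currents, so that the module structure restricts.

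Carrying out the first step, recall that (with $H=\sum_k(n-k)\,\pi_{\wedge^kX}$) the form generators satisfy the symplectic $\mathfrak{sl}(2;\R)$-relations $[L,\Lambda]=H$, $[H,L]=-2L$ and $[H,\Lambda]=2\Lambda$. Transposition reverses products, $\trasposta{(AB)}=\trasposta B\,\trasposta A$, so each current-commutator is, up to the sign carried by $-\trasposta H$, the transpose of the corresponding form-commutator:
\begin{align*}
[\trasposta L,\trasposta\Lambda] &= \trasposta{(\Lambda L-L\Lambda)} = -\trasposta{[L,\Lambda]} = -\trasposta H, \\
[-\trasposta H,\trasposta L] &= \trasposta{(HL-LH)} = \trasposta{[H,L]} = -2\,\trasposta L, \\
[-\trasposta H,\trasposta\Lambda] &= \trasposta{(H\Lambda-\Lambda H)} = \trasposta{[H,\Lambda]} = 2\,\trasposta\Lambda.
\end{align*}
Thus $\trasposta L$, $\trasposta\Lambda$, $-\trasposta H$ obey literally the same brackets as $L$, $\Lambda$, $H$; the sign placed on the current $H$ is exactly what turns the reversal produced by transposition back into the original relations. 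Since the form operators are continuous and of order zero, their transposes act on all of $\correnti^\bullet X$, which gives the $\mathfrak{sl}(2;\R)$-module structure there, and, the relations being identical to those on forms, the inclusion $T_\sspace$ is equivariant.

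For the second assertion, the brackets now hold on all currents, hence a fortiori on any invariant subspace; so everything reduces to showing that $\trasposta L$, $\trasposta\Lambda$ and $-\trasposta H$ map flat currents to flat currents. The operator $-\trasposta H$ is, degree by degree, multiplication by the scalar $-(n-k)$ on $\correnti^k X$, so it preserves each homogeneous component and in particular flatness. For $\trasposta L$ one has $\trasposta L\,S=S(\omega\wedge\sspace)=S\lfloor\omega$, the restriction of $S$ by the smooth, $\de$-closed form $\omega$; the good behaviour with respect to the boundary, $\del(S\lfloor\omega)=\pm(\del S)\lfloor\omega$ coming from $\de\omega=0$, transports a decomposition $S=R+\del U$ (with $R$, $U$ of finite mass) into one of $S\lfloor\omega$ of the same type, whence $S\lfloor\omega$ is again flat. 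This is just the module structure of flat currents over closed smooth forms.

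The remaining operator $\trasposta\Lambda$ is the genuine obstacle. On representatives it is contraction by the Poisson bivector $\Pi=\omega^{-1}$, and, in contrast with $\trasposta L$, it does \emph{not} have order-zero commutator with the boundary: by the basic identity \eqref{basic-symplectic-identity} one has $[\del,\trasposta\Lambda]=\pm\trasposta{(\de^\Lambda)}$, a genuine first-order operator. Consequently the naive transport of a decomposition $S=R+\del U$ breaks down, since the term $\trasposta{(\de^\Lambda)}\,U$ need not have finite mass, and flatness-preservation for $\trasposta\Lambda$ is therefore not a soft boundedness statement but the true technical heart of \cite[Lemma 5.12]{lin}. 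My plan here would be to rewrite $\Lambda=-\star_\omega L\star_\omega$, reducing the claim to the statement that the transposed symplectic-$\star$-operator preserves flat currents, and then to settle this last point by a geometric-measure-theoretic analysis of how the order-zero operators attached to $\omega$ act on Federer's flat chains, exploiting the compactness of $X$ together with a smoothing argument to control the first-order boundary term. With this invariance in hand, the brackets of the first step immediately furnish the $\mathfrak{sl}(2;\R)$-module structure on the space of flat currents.
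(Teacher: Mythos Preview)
The paper does not supply its own proof of this proposition: it is quoted verbatim as \cite[Lemma 5.2, Lemma 5.12]{lin} and then used without argument. So there is no in-paper proof to compare your proposal against; the authors simply import Lin's result.

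On the substance of your attempt: the first bullet is handled correctly. The duality computation with $\trasposta L$, $\trasposta\Lambda$, $-\trasposta H$ is clean, and the sign on $H$ is exactly what is needed for the transposed triple to satisfy the $\mathfrak{sl}(2;\R)$ relations again; this is essentially the content of \cite[Lemma 5.2]{lin}.

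The second bullet, however, is not proved in your proposal --- and you say so yourself. You correctly isolate the real issue: $\trasposta L$ and $-\trasposta H$ preserve flat currents by soft arguments, while $\trasposta\Lambda$ does not commute with the boundary up to an order-zero operator, so a flat decomposition $S=R+\del U$ is not carried to one for $\trasposta\Lambda S$. But from that point on you only sketch a strategy (``rewrite $\Lambda=-\star_\omega L\star_\omega$ and show $\trasposta{\star_\omega}$ preserves flat currents by a geometric-measure-theoretic analysis \ldots\ smoothing argument'') without carrying it out. That last step is precisely the nontrivial content of \cite[Lemma 5.12]{lin}, and nothing in your write-up supplies it: you have relocated the difficulty from $\Lambda$ to $\star_\omega$ without discharging it. As written, the second assertion remains a plan rather than a proof.
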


\noindent In particular, we get a Lefschetz decomposition on the space of currents, \cite[Proposition 5.3]{lin}:
$$ \correnti^\bullet X \;=\; \bigoplus_{r\in\N} L^r\, \Primcorrenti^{\bullet-2r}X \;:=\; \bigoplus_{r\in\N} L^r\, \Primcorrenti_{2n-\bullet+2r}X \;.$$
Finally, if $j\colon Y\hookrightarrow X$ is a compact submanifold of $X$ of codimension $k$, then it is defined the {\em dual current} $\rho_Y\in\correnti_k X$ associated with $Y$, by setting
$$
 \rho_Y(\varphi) \;:=\; \int_Y j^*(\varphi) \:,
$$
for every test form $\varphi\in\wedge^kX$. If $Y$ is a closed submanifold, then the dual current $\rho_Y$ is closed, and, according to \cite[Lemma 4.1]{tseng-yau-1}, $\rho_Y$ is primitive if and only if $Y$ is co-isotropic.

\section{Symplectic (co)homology decomposition}\label{sec:cohomology}
In this section, we provide a symplectic counterpart to T.-J. Li and W. Zhang's theory on cohomology of almost-complex manifolds developed in \cite{li-zhang}.

\medskip

Let $X$ be a $2n$-dimensional closed manifold endowed with a symplectic structure $\omega$. For any $r,s\in\N$, define
$$ H^{(r,s)}_\omega(X;\R) \;:=\; \left\{\left[L^r\,\beta^{(s)}\right]\in H^{2r+s}_{dR}(X;\R) \st \beta^{(s)}\in \Prim^sX \right\} \;\subseteq\; H^{2r+s}_{dR}(X;\R) \;.$$
Obviously, for every $k\in\N$, one has
$$ \sum_{2r+s=k} H^{(r,s)}_\omega(X;\R) \;\subseteq\; H^k_{dR}(X;\R) \;:$$
we are concerned in studying when the above inclusion is actually an equality, and when the sum is actually a direct sum.

\begin{rem}
We underline the relations between the above subgroups and the primitive cohomologies introduced by L.-S. Tseng and S.-T. Yau in \cite{tseng-yau-1}.\\
As regards L.-S. Tseng and S.-T. Yau's primitive $\left(\de+\de^\Lambda\right)$-cohomology $\PH^{\bullet}_{\de+\de^{\Lambda}}(X;\R)$, note that, for every $r,s\in\N$,
$$ \imm\left(L^r\,\PH^{s}_{\de+\de^{\Lambda}}(X;\R)\to H^\bullet_{dR}(X;\R)\right) \;=\; L^r\, H^{(0,s)}_\omega(X;\R) \;\subseteq\; H^{(r,s)}_\omega(X;\R) \;. $$
In \cite[\S4.1]{tseng-yau-1}, L.-S. Tseng and S.-T. Yau have introduced also the primitive cohomology groups
$$ \PH^s_{\de}(X;\R) \;:=\; \frac{\ker\de\cap\ker\de^\Lambda\cap\Prim^sX}{\imm\de\lfloor_{\Prim^{s-1}X\cap\ker\de^\Lambda}} \;, $$
where $s\in\N$, proving that the homology on co-isotropic chains is naturally dual to $\PH^{2n-\bullet}_{\de}(X;\R)$, see \cite[page 41]{tseng-yau-1};
in \cite[Lemma 2.7]{lin}, Y. Lin has proved that, if the Hard Lefschetz Condition holds on $X$, then
$$ H^{(0,\bullet)}_\omega(X;\R) \;=\; \PH^\bullet_{\de}(X;\R) \;. $$
\end{rem}

\begin{rem}
 In \cite{conti-tomassini}, D. Conti and the second author studied the notion of half-flat structure on a $6$-dimensional manifold $X$ (see \cite{chiossi-salamon}). Namely, an \emph{$\mathrm{SU}(3)$-structure} $\left(\omega,\, \psi\right)$ on $X$ (where $\omega$ is a non-degenerate real $2$-form, and $\psi$ is a decomposable complex $3$-form, such that $\psi \wedge \omega = 0$ and $\psi\wedge\bar\psi = -\frac{4\im}{3}\,\omega^ 3$) is called \emph{half-flat} if both $\omega\wedge\omega$ and $\Re\mathfrak{e}\, \psi$ are $\de$-closed. Note in particular that, if $\left(\omega,\, \psi\right)$ is a symplectic half-flat structure on $X$, then $\left[\Re\mathfrak{e}\,\psi\right]\in H^{(0,3)}_\omega(X;\R)$. Furthermore, $\Re\mathfrak{e}\,\psi$ is a calibration on $X$ and special Lagrangian submanifolds are naturally defined also in this context.
\end{rem}

\begin{rem}
A class of example of closed symplectic manifolds satisfying the cohomology decomposition by means of the above subgroups $H^{\bullet,\bullet}_\omega(X;\R)$ (actually, satisfying an even stronger cohomology decomposition) is provided by the closed symplectic manifolds satisfying the $\de\de^\Lambda$-Lemma, equivalently, by the Hard Lefschetz Condition. More precisely, recall that, by \cite{yan} (see also \cite[Theorem 3.11, Proposition 3.13]{tseng-yau-1}), for a $2n$-dimensional closed manifold $X$ endowed with a symplectic structure $\omega$, the following conditions are equivalent:
 \begin{itemize}
  \item $X$ satisfies the $\de\de^\Lambda$-Lemma;
  \item it holds that
    $$ H^\bullet_{dR}(X;\R) \;=\; \bigoplus_{r\in\N} L^r\, H^{(0,\bullet-2r)}_\omega(X;\R) \;.$$
 \end{itemize}
\end{rem}

\medskip

Analogously, considering the space $\correnti^\bullet X$ of currents and the de Rham homology $H_\bullet^{dR}(X;\R)$, for every $r,s\in\N$, define
$$ H_{(r,s)}^\omega(X;\R) \;:=\; \left\{\left[L^r\, B_{(s)}\right]\in H_{-2r+s}^{dR}(X;\R) \st B_{(s)}\in \Primcorrenti_sX \right\} \;\subseteq\; H_{-2r+s}^{dR}(X;\R) \;;$$
as previously, for every $k\in\N$, we have just the inclusion
$$ \sum_{-2r+s=k} H_{(r,s)}^\omega(X;\R) \;\subseteq\; H_k^{dR}(X;\R) \;,$$
but, in general, neither the sum is direct nor the inclusion is an equality.

\medskip

We prove that, fixed $k\in\N$, if the sum $\sum_{2r+s=2n-k}H^{(r,s)}_\omega(X;\R)$ gives the whole \kth{\left(2n-k\right)} de Rham cohomology group, then the sum of the subgroups of the \kth{k} de Rham cohomology group is direct (this result should be compared with \cite[Proposition 2.30]{li-zhang}, see also \cite[Theorem 2.1]{angella-tomassini}, in the almost-complex case).

\begin{prop}\label{prop:full-k-pure-2n-k}
Let $X$ be a $2n$-dimensional closed manifold endowed with a symplectic structure $\omega$.
For every $k\in\N$, the following implications hold:
$$
\xymatrix{
 H^{k}_{dR}(X;\R) = \sum_{2r+s=k}H^{(r,s)}_\omega(X;\R) \ar@{=>}[r] \ar@{=>}[d] & \bigoplus_{-2r+s=k} H_{(r,s)}^\omega(X;\R) \;\subseteq\; H_k^{dR}(X;\R) \ar@{=>}[d] \\
 H_{2n-k}^{dR}(X;\R) = \sum_{-2r+s=2n-k}H_{(r,s)}^\omega(X;\R) \ar@{=>}[r] & \bigoplus_{2r+s=2n-k} H^{(r,s)}_\omega(X;\R) \;\subseteq\; H^{2n-k}_{dR}(X;\R) \;. \\
}
$$
\end{prop}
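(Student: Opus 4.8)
The plan is to deduce all four implications from two ingredients: the perfect de Rham pairing between forms and currents together with the $\mathfrak{sl}(2;\R)$-representation theory on $\wedge^\bullet X$; and the fact that the de Rham isomorphism $T_\sspace\colon H^\bullet_{dR}(X;\R)\xrightarrow{\simeq}H_\bullet^{dR}(X;\R)$ is compatible with the Lefschetz decompositions. Because the square is symmetric, the two horizontal arrows are the single argument applied with $k$ and $2n-k$ exchanged, and the two vertical arrows are a second argument applied with forms and currents exchanged; hence it suffices to treat the top arrow and the left arrow.

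For the top arrow I would first record an elementary lemma: given a non-degenerate pairing $\langle\sspace,\sspace\rangle\colon V\times W\to\R$ of finite-dimensional real vector spaces, families of subspaces $\{V_i\}_{i\in I}$, $\{W_j\}_{j\in J}$, and an injection $\phi\colon J\hookrightarrow I$ with $\langle V_i,W_j\rangle=0$ whenever $i\neq\phi(j)$, the equality $\sum_i V_i=V$ forces the sum $\sum_j W_j$ to be direct. (Indeed, if $\sum_j w_j=0$ with $w_j\in W_j$, then testing against $V_{\phi(j_0)}$ annihilates every term but $w_{j_0}$ by injectivity of $\phi$, so $w_{j_0}\perp V_{\phi(j_0)}$; since $w_{j_0}\perp V_i$ for $i\neq\phi(j_0)$ by hypothesis, $w_{j_0}$ annihilates $\sum_i V_i=V$, whence $w_{j_0}=0$.) I apply this with $V=H^k_{dR}(X;\R)$, $V_r=H^{(r,k-2r)}_\omega(X;\R)$, with $W=H_k^{dR}(X;\R)$, $W_{r'}=H_{(r',k+2r')}^\omega(X;\R)$, and with the de Rham pairing $\langle[S],[\varphi]\rangle:=S(\varphi)$, which is non-degenerate by the isomorphism $H^\bullet_{dR}(X;\R)\simeq H_\bullet^{dR}(X;\R)$ recalled above. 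Hypothesis TL is exactly $\sum_r V_r=V$, and the conclusion that $\bigoplus_{r'}W_{r'}$ is direct is TR; the bottom arrow is the identical statement with $k$ replaced by $2n-k$.

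The crux is the required block-orthogonality. On representatives, the pairing of $H^{(r,s)}_\omega$ with $H_{(r',s')}^\omega$ reads $\langle[L^{r'}B_{(s')}],[L^r\beta^{(s)}]\rangle=B_{(s')}\!\left(L^{r+r'}\beta^{(s)}\right)$, the degrees forcing $s'=s+2(r+r')$. Here I would dualize the Lefschetz decomposition of forms: from $\imm\bigl(\Lambda\colon\wedge^{s'+2}X\to\wedge^{s'}X\bigr)=\bigoplus_{a>s'-n}L^a\,\Prim^{s'-2a}X$ it follows that a primitive current $B_{(s')}\in\Primcorrenti_{s'}X$ annihilates every Lefschetz component of $\wedge^{s'}X$ except the bottom one $L^{s'-n}\Prim^{2n-s'}X$ (in particular $\Primcorrenti_{s'}X=0$ for $s'<n$). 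Since $L^{r+r'}\beta^{(s)}$ lies in the component $L^{r+r'}\Prim^{s}X$, the pairing vanishes unless this is that bottom component, i.e. unless $r+r'=s'-n$ and $s=2n-s'$; these determine $r=k+r'-n=:\phi(r')$, and $\phi$ is injective. This is the step I expect to be the main obstacle, as it requires the precise image of $\Lambda$ in each degree and careful bookkeeping of the homological degrees.

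For the left arrow I would exploit that $T_\sspace$ respects the Lefschetz data. It intertwines $L$ because $\int_X(\omega\wedge\alpha)\wedge\psi=\int_X\alpha\wedge(\omega\wedge\psi)$; and $T_\sspace\beta^{(s)}$ is a primitive current whenever $\beta^{(s)}\in\Prim^sX$, since primitivity of $T_\sspace\beta^{(s)}$ amounts to $\int_X\beta^{(s)}\wedge\Lambda\psi=0$ for all $\psi\in\wedge^{2n-s+2}X$, and the same description of $\imm\Lambda$ together with the orthogonality $\int_X L^a\beta\wedge L^b\gamma=0$ (valid unless the primitive degrees agree and $a+b=n-s$, which follows from Weil's formula for $\star_\omega$) excludes exactly the surviving component. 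Hence $T_\sspace\bigl(L^r\beta^{(s)}\bigr)=L^r\,T_\sspace\beta^{(s)}$ and $T_\sspace\bigl(H^{(r,s)}_\omega(X;\R)\bigr)\subseteq H_{(r,2n-s)}^\omega(X;\R)$. As $T_\sspace$ is an isomorphism sending degree $k$ to dimension $2n-k$ and the reindexing $(r,s)\mapsto(r,2n-s)$ matches $\{2r+s=k\}$ with $\{-2r+s=2n-k\}$, hypothesis TL transports to the fullness statement BL. The same map, now read as $T_\sspace\colon H^{2n-k}_{dR}(X;\R)\xrightarrow{\simeq}H_k^{dR}(X;\R)$, carries the summands of BR injectively into those of TR, so the directness in TR pulls back to the directness in BR. This establishes the remaining two arrows and closes the square.
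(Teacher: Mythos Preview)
Your proposal is correct and follows the same two-step strategy as the paper: the vertical arrows via the quasi-isomorphism $T_\sspace$ preserving the Lefschetz bi-graduation, and the horizontal arrows via the non-degenerate de Rham pairing together with the block-orthogonality $\langle H^{(r,s)}_\omega,\,H_{(p,q)}^\omega\rangle=0$ for $(p,q)\neq(n-r-s,\,2n-s)$. You supply considerably more detail than the paper (the abstract pairing lemma, the explicit description of $\imm\Lambda$, the bookkeeping of indices), all of which is sound; one simplification worth noting is that the primitivity of $T_\sspace\beta^{(s)}$ follows immediately from $T_{L\sspace}=L\,T_\sspace$ and the characterization $\Prim^s=\ker L^{n-s+1}$, so your detour through $\imm\Lambda$ and Weil's formula there is unnecessary.
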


\begin{proof}
 Note that the quasi-isomorphism $T_\sspace\colon \wedge^\bullet X\to\correnti^\bullet X$ satisfies
 $$ T_{L\, \sspace} \;=\; L T_\sspace \;, $$
 and hence, in particular, it preserves the bi-graduation,
 $$ T \left(L^{\bullet}\, \Prim^{\bullet}X\right) \;\subseteq\; L^{\bullet}\, \Primcorrenti^{\bullet}X \;:=:\; L^{\bullet}\, \Primcorrenti_{2n-\bullet}X \;, $$
 and it induces, for every $r,s\in\N$, an injective map
 $$ H^{(r,s)}_\omega(X;\R) \hookrightarrow H_{(r,2n-s)}^\omega(X;\R) \;. $$
 Therefore the two vertical implications are proven.\\
 Consider now the non-degenerate duality pairing
 $$ \left\langle \sspace,\,\ssspace\right\rangle \colon H^{\bullet}_{dR}(X;\R) \times H_{\bullet}^{dR}(X;\R) \to \R\;, $$
 and note that, for every $r,s\in\N$,
 $$ \ker\left\langle H^{(r,s)}_{\omega}(X;\R),\;\sspace\right\rangle \;\supseteq\; \sum_{(p,q)\neq(n-r-s,2n-s)} H_{(p,q)}^{\omega}(X;\R) \;.$$
 Arguing in the same way in the case of currents, this suffices to prove the two horizontal implications.
\end{proof}

A straightforward consequence of \cite{yan} and Proposition \ref{prop:full-k-pure-2n-k} is the following result, which should be compared with \cite[Theorem 2.16, Proposition 2.17]{draghici-li-zhang}.

\begin{cor}\label{cor:hlc}
 Let $X$ be a closed manifold endowed with a symplectic structure $\omega$. Suppose that the Hard Lefschetz Condition holds on $X$, equivalently, that $X$ satisfies the $\de\de^\Lambda$-Lemma. Then
 $$ H^\bullet_{dR}(X;\R) \;=\; \bigoplus_{r\in\N} H^{(r,\bullet-2r)}_\omega(X;\R) $$
 and
 $$ H_\bullet^{dR}(X;\R) \;=\; \bigoplus_{r\in\N} H_{(r,\bullet+2r)}^\omega(X;\R) \;.$$
\end{cor}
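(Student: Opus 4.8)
The plan is to use the Hard Lefschetz Condition only to obtain \emph{fullness} of the symplectic decomposition in cohomology, and then to let the duality machinery of Proposition \ref{prop:full-k-pure-2n-k} bootstrap this single input into directness in cohomology together with fullness and directness in homology. So the whole argument reduces to one soft observation plus a formal diagram chase.

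First I would record the starting point. By the equivalences recalled above, the Hard Lefschetz Condition is the same as the $\de\de^\Lambda$-Lemma, and by \cite{yan} the latter is equivalent to the decomposition
$$ H^\bullet_{dR}(X;\R) \;=\; \bigoplus_{r\in\N} L^r\, H^{(0,\bullet-2r)}_\omega(X;\R) \;. $$
Since $L^r\, H^{(0,s)}_\omega(X;\R) \subseteq H^{(r,s)}_\omega(X;\R)$, restricting to total degree $k$ yields
$$ H^k_{dR}(X;\R) \;=\; \bigoplus_{2r+s=k} L^r\, H^{(0,s)}_\omega(X;\R) \;\subseteq\; \sum_{2r+s=k} H^{(r,s)}_\omega(X;\R) \;\subseteq\; H^k_{dR}(X;\R) \;, $$
so all inclusions are equalities and cohomology is full in every degree, i.e.\ $H^k_{dR}(X;\R) = \sum_{2r+s=k} H^{(r,s)}_\omega(X;\R)$ for all $k\in\N$.

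Then I would feed this into Proposition \ref{prop:full-k-pure-2n-k}, applied degree by degree. Its top-left hypothesis now holds for every $k$. The top horizontal implication gives $\bigoplus_{-2r+s=k} H_{(r,s)}^\omega(X;\R) \subseteq H_k^{dR}(X;\R)$, i.e.\ homology directness in degree $k$; letting $k$ vary, homology is direct in all degrees. The left vertical implication gives the homology fullness $H_{2n-k}^{dR}(X;\R) = \sum_{-2r+s=2n-k} H_{(r,s)}^\omega(X;\R)$, and as $k$ ranges over $\N$ this covers all homological degrees. Finally the bottom horizontal implication converts this homology fullness into cohomology directness $\bigoplus_{2r+s=2n-k} H^{(r,s)}_\omega(X;\R) \subseteq H^{2n-k}_{dR}(X;\R)$, again in every degree.

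Combining, cohomology is both full and direct, whence $H^\bullet_{dR}(X;\R) = \bigoplus_{r\in\N} H^{(r,\bullet-2r)}_\omega(X;\R)$, and homology is both full and direct, whence $H_\bullet^{dR}(X;\R) = \bigoplus_{r\in\N} H_{(r,\bullet+2r)}^\omega(X;\R)$. The only genuine obstacle is securing the initial fullness, and Yan's theorem hands it to us directly; everything afterwards is a formal chase through the duality already established in Proposition \ref{prop:full-k-pure-2n-k}, so I expect no analytic difficulty. The one point requiring care is index bookkeeping, making sure that fullness of the $k$-th cohomology is correctly dualized to the $(2n-k)$-th statements, so that the chase does reach every degree.
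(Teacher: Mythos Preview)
Your proof is correct and follows precisely the route the paper indicates: the corollary is stated as ``a straightforward consequence of \cite{yan} and Proposition \ref{prop:full-k-pure-2n-k}'', and you have carried out exactly that deduction, first extracting fullness in every cohomological degree from Yan's decomposition and then running the implications of Proposition \ref{prop:full-k-pure-2n-k} to obtain directness in cohomology together with fullness and directness in homology.
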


In particular, when $\dim X=4$ and taking $k=2$ in Proposition \ref{prop:full-k-pure-2n-k}, one gets that, if $H^{2}_{dR}(X;\R)=H^{(1,0)}_\omega(X;\R) + H^{(0,2)}_\omega(X;\R)$ holds, then actually $H^2_{dR}(X;\R) = H^{(1,0)}_\omega(X;\R) \oplus H^{(0,2)}_\omega(X;\R)$ holds. In fact, the following result states that $H^2_{dR}(X;\R)$ always decomposes as direct sum of $H^{(1,0)}_\omega(X;\R)$ and $H^{(0,2)}_\omega(X;\R)$, also in dimension higher than $4$: this gives a symplectic counterpart to T. Dr\v{a}ghici, T.-J. Li and W. Zhang's \cite[Theorem 2.3]{draghici-li-zhang} in the complex setting (in fact, without the restriction to dimension $4$).

\begin{thm}\label{thm:sympl-decomp-H2}
 Let $X$ be a closed manifold endowed with a symplectic structure $\omega$. Then
 \begin{eqnarray*}
  H^2_{dR}(X;\R) &=& H^{(1,0)}_\omega(X;\R) \oplus H^{(0,2)}_\omega(X;\R) \;.
 \end{eqnarray*}
 In particular, if $\dim X=4$, then
 $$ H^{\bullet}_{dR}(X;\R) \;=\; \bigoplus_{r\in\N} H^{(r,\bullet-2r)}_\omega(X;\R) $$
 and
 $$ H_{\bullet}^{dR}(X;\R) \;=\; \bigoplus_{r\in\N} H_{(r,\bullet+2r)}^\omega(X;\R) \;.$$
\end{thm}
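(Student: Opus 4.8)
The plan is to establish the three facts that together give the asserted equality: that $H^{(1,0)}_\omega(X;\R)$ is the line $\R\,[\omega]$, that the sum $H^{(1,0)}_\omega(X;\R)+H^{(0,2)}_\omega(X;\R)$ exhausts $H^2_{dR}(X;\R)$, and that it is direct. Write $2n=\dim X$; the case $n=1$ is immediate since then $\Prim^2X=0$, so assume $n\geq 2$. First I would identify the first summand: a class in $H^{(1,0)}_\omega(X;\R)$ is represented by $f\,\omega$ with $\de(f\,\omega)=\de f\wedge\omega=L(\de f)=0$, and since $L\colon\wedge^1X\to\wedge^3X$ is injective for $n\geq 2$ (recalled in the Preliminaries) this forces $\de f=0$, hence $f$ constant and $H^{(1,0)}_\omega(X;\R)=\R\,[\omega]$. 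For directness, if a class lies in the intersection then $c\,\omega=\beta+\de\eta$ for some constant $c$, some closed $\beta\in\Prim^2X$ and some $\eta\in\wedge^1X$; wedging with $\omega^{n-1}$, using that a primitive $2$-form satisfies $\omega^{n-1}\wedge\beta=L^{n-1}\beta=0$ and that $\de(\omega^{n-1}\wedge\eta)=\omega^{n-1}\wedge\de\eta$, I get $c\,\omega^n=\de(\omega^{n-1}\wedge\eta)$, whence $c\int_X\omega^n=0$ and so $c=0$ because $\int_X\omega^n\neq 0$.

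The content is in the spanning statement. Given $[\alpha]\in H^2_{dR}(X;\R)$ with $\alpha$ closed, I would take its Lefschetz decomposition $\alpha=f\,\omega+\beta$ with $f\in C^\infty(X)$ and $\beta\in\Prim^2X$, and set $c:=\left(\int_X\omega^n\right)^{-1}\int_X f\,\omega^n$, so that $g:=f-c$ has vanishing $\omega^n$-average. Using $\Lambda\omega=n$ and $\Lambda\beta=0$, the closed form $\alpha':=\alpha-c\,\omega=g\,\omega+\beta$ satisfies $\Lambda\alpha'=n\,g$. The idea is to correct $\alpha'$ by an exact form so as to make it primitive: I seek $\eta\in\wedge^1X$ with $\Lambda\,\de\eta=n\,g$; since $\de^\Lambda\eta=[\de,\Lambda]\eta=-\Lambda\,\de\eta$ on a $1$-form, this is the equation $\de^\Lambda\eta=-n\,g$. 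Granting a solution, $\alpha'-\de\eta$ is closed and primitive, so $[\alpha]-c\,[\omega]=[\alpha'-\de\eta]\in H^{(0,2)}_\omega(X;\R)$ and therefore $[\alpha]\in H^{(1,0)}_\omega(X;\R)+H^{(0,2)}_\omega(X;\R)$, as wanted.

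The main obstacle is exactly the solvability of $\de^\Lambda\eta=-n\,g$ in $\wedge^1X$, and I would handle it by computing the image of $\de^\Lambda$ on $1$-forms. Since $\star_\omega^2=\id$ and $\de^\Lambda=\star_\omega\de\star_\omega$ on $\wedge^1X$, for a function $\phi$ one has $\star_\omega(\de^\Lambda\eta)=\de(\star_\omega\eta)$, so $\phi\in\imm(\de^\Lambda\lfloor_{\wedge^1X})$ forces $\star_\omega\phi$ to be exact; conversely, if $\star_\omega\phi=\de\psi$ then $\phi=\de^\Lambda(\star_\omega\psi)$. As $\star_\omega$ is $C^\infty(X)$-linear with $\star_\omega 1=\omega^n$, one has $\star_\omega\phi=\phi\,\omega^n$ and $\int_X\star_\omega\phi=\int_X\phi\,\omega^n$; since a top-degree form on a closed manifold is exact precisely when its integral vanishes, I conclude that $\phi\in\imm(\de^\Lambda\lfloor_{\wedge^1X})$ if and only if $\int_X\phi\,\omega^n=0$ (consistently with the isomorphism $\star_\omega\colon H^{2n}_{dR}(X;\R)\to H^0_{\de^\Lambda}(X;\R)$ of the Preliminaries, which shows this image has codimension one in $C^\infty(X)$). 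Applying this with $\phi=-n\,g$, the equation is solvable precisely because $\int_X g\,\omega^n=0$ by the choice of $c$, which completes the spanning argument.

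For the final assertion in dimension $4$ I would argue degree by degree. When $n=2$ one has $\Prim^3X=\Prim^4X=0$ and $L\,\Prim^2X=0$, so that in degrees $0,1,3,4$ all but one of the summands $H^{(r,s)}_\omega(X;\R)$ with $2r+s=k$ vanish, and the surviving one already fills $H^k_{dR}(X;\R)$ --- for $k=3$ because every closed $3$-form equals $\omega\wedge(\text{a }1\text{-form})$ by the Lefschetz decomposition, the remaining cases being trivial --- while degree $2$ is the equality just proved. Hence the full direct-sum decomposition of the cohomology holds in every degree, and the corresponding decomposition of $H_\bullet^{dR}(X;\R)$ then follows formally from Proposition \ref{prop:full-k-pure-2n-k}, which transfers fullness of the cohomological decomposition in degree $k$ to both directness in degree $k$ and fullness in the complementary degree $2n-k$ on the homology side.
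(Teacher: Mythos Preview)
Your proof is correct. The route differs from the paper's in presentation, though the underlying ideas are dual to one another. For directness, the paper does not first identify $H^{(1,0)}_\omega(X;\R)$ with $\R[\omega]$; instead, from $[f\omega]=[\beta^{(2)}]$ with $\beta^{(2)}$ primitive it computes $0=\int_X f\,L^{n-1}\beta^{(2)}=\int_X (f\omega)\wedge(f\omega)\wedge\omega^{n-2}=\int_X f^2\,\omega^n$, forcing $f\equiv 0$; your version is shorter precisely because you have already pinned down the constant. For spanning, the paper argues by contradiction via the characterization $\Prim^2X=\ker L^{n-1}\lfloor_{\wedge^2X}$ and the isomorphism $L^{n-1}\colon\wedge^1X\to\wedge^{2n-1}X$: if no representative of $[\alpha]$ were primitive then $[L^{n-1}\alpha]\neq 0$, while if none were of the form $f\omega$ then $[\alpha]\neq\lambda[\omega]$ for any $\lambda$, and these together contradict $H^{2n}_{dR}(X;\R)=\R\langle[\omega^n]\rangle$. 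You instead use the dual characterization $\Prim^2X=\ker\Lambda$ and solve $\de^\Lambda\eta=-ng$ constructively through $\star_\omega$; the two arguments are mirror images, one working at degree $2n$ via $L^{n-1}$ and the other at degree $0$ via $\Lambda$, and they produce the same constant $c$. Your approach buys an explicit construction and exploits the $\de^\Lambda$/$\star_\omega$ machinery set up in the preliminaries; the paper's stays with the bare Lefschetz operators and is marginally more elementary. Finally, the paper gives no separate argument for the four-dimensional statements inside the proof, so your degree-by-degree check together with the appeal to Proposition~\ref{prop:full-k-pure-2n-k} for the homology side is actually more complete than what appears there.
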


\begin{proof}
 Firstly, we prove that $H^{(1,0)}_\omega(X;\R) \cap H^{(0,2)}_{\omega}(X;\R)=\{0\}$. Let
 $$
 \mathfrak{c}:=:\left[f\,\omega\right]:=:\left[\beta^{(2)}\right]\in H^{(1,0)}_\omega(X;\R) \cap H^{(0,2)}_{\omega}(X;\R)\,,
 $$ where $f\in\mathcal{C}^\infty(X;\R)$ and $\beta^{(2)}\in \Prim^{2}X$. Being $\Prim^{2}X=\ker L^{n-1}\lfloor_{\wedge^{2}X}$, one has
 \begin{eqnarray*}
  0 &=& \int_X f\,L^{n-1} \beta^{(2)} \;=\; \int_X f\,\omega \wedge \beta^{(2)} \wedge \omega^{n-2} \\[5pt]
    &=& \int_X f\,\omega \wedge f\,\omega \wedge \omega^{n-2} \;=\; \int_X f^2 \omega^n
 \end{eqnarray*}
 hence $f=0$, that is, $\mathfrak{c}=0$.\\
 Now, we prove that $H^{2}_{dR}(X;\R)=H^{(1,0)}_{\omega}(X;\R)+H^{(0,2)}_{\omega}(X;\R)$. Suppose that $$
 \mathfrak{a}:=:\left[\alpha\right]\not\in H^{(1,0)}_\omega(X;\R) + H^{(0,2)}_\omega(X;\R)\,,
  $$
  that is, for every $\gamma\in\wedge^1X$, one has $\alpha+\de\gamma \not\in L\wedge^0X$ and $\alpha+\de\gamma \not\in \Prim^2X$. In particular, asking that $\alpha+\de\gamma \not\in \Prim^2X=\ker L^{n-1}\lfloor_{\wedge^2X}$ means that $\left[L^{n-1}\alpha\right]\neq 0$ in $H^{2n}_{dR}(X;\R)$, since $L^{n-1}\colon\wedge^1X\to\wedge^{2n-1}X$ is an isomorphism. Since $L\colon\wedge^1X\to \wedge^3X$ is injective, asking that $\alpha+\de\gamma \not\in L\wedge^0X$, we get that $\left[\alpha\right] \neq \lambda\,\left[\omega\right]$ for any $\lambda\in\R\setminus\{0\}$. Hence we get that $\left[L^{n-1}\alpha\right] \not\in \left\langle \left[\omega^n\right] \right\rangle$, which is absurd.
\end{proof}

\begin{rem}
 Note that the argument in the proof of Theorem \ref{thm:sympl-decomp-H2} can be generalized to prove the following:\\
{\itshape If $X$ be a $2n$-dimensional closed manifold endowed with a symplectic structure $\omega$, then, for every $k\in\left\{1,\ldots,\left\lfloor\frac{n}{2}\right\rfloor\right\}$, it holds
$$ H^{(k,0)}_\omega(X;\R) \cap H^{(0,2k)}_{\omega}(X;\R) \;=\; \left\{0\right\} \;.$$}
\end{rem}

\medskip

In some cases, the study of the spaces $H^{(r,s)}_\omega(X;\R)$ can be reduced to the study of $H^{(0,r)}_\omega(X;\R)$: this is the matter of the following result.

\begin{prop}\label{prop:livello<k}
 Let $X$ be a $2n$-dimensional closed manifold endowed with a symplectic structure $\omega$. Then, for every $r,s\in\N$ such that $2r+s\leq n$, one has
$$ H^{(r,s)}_\omega(X;\R) \;=\; L^r H^{(0,s)}_\omega(X;\R) \;.$$
\end{prop}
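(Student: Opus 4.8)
The plan is to establish the two inclusions separately. The inclusion $L^r\,H^{(0,s)}_\omega(X;\R)\subseteq H^{(r,s)}_\omega(X;\R)$ is immediate from the definitions: since $\omega$ is $\de$-closed, for a $\de$-closed primitive $s$-form $\beta^{(s)}$ the form $L^r\beta^{(s)}=\omega^r\wedge\beta^{(s)}$ is again $\de$-closed and of the required shape, so $L^r[\beta^{(s)}]=[L^r\beta^{(s)}]$ lies in $H^{(r,s)}_\omega(X;\R)$. All the content is therefore in the reverse inclusion.

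For the reverse inclusion, take a class $\mathfrak{c}=[L^r\beta^{(s)}]\in H^{(r,s)}_\omega(X;\R)$ with $\beta^{(s)}\in\Prim^sX$; by definition $L^r\beta^{(s)}$ is $\de$-closed. The idea is to show that $\beta^{(s)}$ is \emph{itself} $\de$-closed, for then $[\beta^{(s)}]\in H^{(0,s)}_\omega(X;\R)$ and $\mathfrak{c}=L^r[\beta^{(s)}]$, as wanted. Using the commutation relation $[\de,\,L]=0$ recalled in Section~\ref{sec:preliminaries}, the closedness of $L^r\beta^{(s)}$ gives $L^r\de\beta^{(s)}=\de L^r\beta^{(s)}=0$. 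Now $\de\beta^{(s)}$ is a form of degree $s+1$, so everything reduces to the injectivity of $L^r$ on $\wedge^{s+1}X$.

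The case $r=0$ being trivial, assume $r\geq1$; then the hypothesis $2r+s\leq n$ yields $(s+1)+r\leq 2r+s\leq n$, which is exactly the range in which $L^r$ is injective on $\wedge^{s+1}X$. Concretely, I would deduce this injectivity from \cite[Corollary 2.8]{yan} recalled above, namely that $L\colon\wedge^mX\to\wedge^{m+2}X$ is injective for $0\leq m\leq n-1$: writing $L^r$ as the composition of $r$ applications of $L$ to the successive degrees $s+1,\,s+3,\,\ldots,\,s+2r-1$, each such degree is at most $s+2r-1\leq n-1$ by hypothesis, so every factor, and hence the composition, is injective. This forces $\de\beta^{(s)}=0$ and completes the argument. \textbf{The main obstacle} is precisely this bookkeeping with degrees: one must verify that the stable-range hypothesis $2r+s\leq n$ keeps every intermediate degree at which $L$ is applied strictly below the middle degree $n$, equivalently that $\de\beta^{(s)}$ lands in the injectivity range of $L^r$. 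It is exactly the implication $2r+s\leq n\Rightarrow(s+1)+r\leq n$ (for $r\geq1$) that makes this work, and that is why the statement is confined to the range $2r+s\leq n$.
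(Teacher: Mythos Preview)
Your proof is correct and follows essentially the same approach as the paper: both use $[\de,L]=0$ to rewrite closedness of $L^r\beta^{(s)}$ as $L^r\de\beta^{(s)}=0$, and then invoke the injectivity of $L$ on $\wedge^jX$ for $j\leq n-1$ (applied iteratively in the range guaranteed by $2r+s\leq n$) to conclude $\de\beta^{(s)}=0$. The paper's version is more compressed and leaves the easy inclusion implicit, but the substance is identical.
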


\begin{proof}
 Since $L\colon\wedge^{j}X\to\wedge^{j+2}X$ is injective for $j\leq n-1$ (in fact, an isomorphism for $j=n-1$), and $\left[\de,\,L\right]=0$, we get that
 \begin{eqnarray*}
  H^{(r,s)}_{\omega}(X;\R) &=& \left\{ \left[\omega^r\,\beta^{(s)}\right]\in H^{2r+s}_{dR}(X;\R) \st \beta^{(s)}\in\wedge^{s}X\cap\ker\Lambda \text{ such that }L^r\de \beta^{(s)}=0\right\} \\[5pt]
  &=& \left\{ \left[\omega^r\right]\smile\left[\beta^{(s)}\right]\in H^{2r+2}_{dR}(X;\R) \st \beta^{(s)}\in\wedge^sX\cap\ker\Lambda \right\} \,,
 \end{eqnarray*}
 assumed that $2r+s\leq n$.
\end{proof}

\noindent In particular, for every $r\in\left\{1,\ldots,\left\lfloor\frac{n}{2}\right\rfloor\right\}$, the spaces $H^{(r,0)}_\omega(X;\R)$ are $1$-dimensional $\R$-vector spaces: more precisely, $H^{(r,0)}_{\omega}(X;\R)=\R\left\langle \left[\omega^r\right] \right\rangle$.\\
In particular, by the previous result follows that, for $k\leq n$, the condition 
$$
H^k_{dR}(X;\R) = \bigoplus_{r\in\N}H^{(r,k-2r)}_\omega(X;\R)
$$ 
is in fact equivalent to $H^k_{dR}(X;\R) = \bigoplus_{r\in\N} L^r\, H^{(0,k-2r)}_\omega(X;\R)$.

\section{Symplectic (co)homology decomposition on solvmanifolds}\label{sec:solvmanifolds}
By a \emph{nilmanifold} (respectively, a \emph{solvmanifold}) we mean a compact quotient of a nilpotent (respectively, solvable) Lie group by a discrete co-compact subgroup. A solvmanifold $X=\left.\Gamma\right\backslash G$ is called \emph{completely-solvable} if, for any $g\in G$, all the eigenvalues of $\mathrm{Ad} g\in\mathrm{End}(G)$ are real, equivalently, if, for any $X\in\mathfrak{g}$, all the eigenvalues of $\mathrm{ad} X\in\mathrm{End}(\mathfrak{g})$ are real.\\
To shorten the notation, we will refer to a given solvmanifold $X=\left.\Gamma\right\backslash G$ writing the structure equations of its Lie algebra: for example, writing
$$ X \;:=\; \left(0^4,\; 12,\; 13\right)\;, $$
we mean that there exists a basis of the naturally associated Lie algebra $\mathfrak{g}$, let us say $\left\{e_1,\,\ldots,\,e_6\right\}$, whose dual will be denoted by $\left\{e^1,\,\ldots,\,e^6\right\}$ and with respect to which the structure equations are
$$
\left\{
\begin{array}{l}
 \de e^1 \;=\; \de e^2 \;=\; \de e^3 \;=\; \de e^4 \;=\; 0 \\[5pt]
 \de e^5 \;=\; e^{1}\wedge e^{2} \;=:\; e^{12} \\[5pt]
 \de e^6 \;=\; e^{1}\wedge e^{3} \;=:\; e^{13}
\end{array}
\right. \;,
$$
where we also shorten $e^{AB}:=e^A\wedge e^B$.
Recall that, by A.~I. Mal'tsev's theorem \cite[Theorem 7]{malcev}, given a nilpotent Lie algebra $\mathfrak{g}$ with rational structure constants, then the connected simply-connected Lie group $G$ naturally associated to $\mathfrak{g}$ admits a co-compact discrete subgroup $\Gamma$, and hence there exists a nilmanifold $X:=\left.\Gamma\right\backslash G$ whose Lie algebra is $\mathfrak{g}$.
Dealing with \emph{$G$-left-invariant} objects on $X$, we mean objects induced by objects on $G$ which are invariant under the left-action of $G$ on itself given by left-translations. By means of left-translations, $G$-left-invariant objects will be identified with objects on the Lie algebra.\\
By A. Hattori's theorem \cite[Corollary 4.2]{hattori}, the cohomology of a completely-solvable solvmanifold $X$ is isomorphic to the cohomology $H^\bullet\left(\mathfrak{g};\R\right):=H^\bullet\left(\wedge^\bullet\duale{\mathfrak{g}},\,\de\right)$ of the complex $\left(\wedge^\bullet\duale{\mathfrak{g}},\,\de\right)$, where $\de\colon \wedge^{\bullet}\duale{\mathfrak{g}} \to \wedge^{\bullet+1}\duale{\mathfrak{g}}$ is induced by $\de_\mathfrak{g}\colon\wedge^1\duale{\mathfrak{g}}\to\wedge^2\duale{\mathfrak{g}}$, $\left(\de_\mathfrak{g}\alpha\right)(x,y):=-\alpha\left(\left[x,y\right]\right)$: for simplicity, in writing the cohomology of a solvmanifolds, we list the harmonic representatives with respect to the $G$-left-invariant metric $g:=\sum_\ell e^\ell\odot e^\ell$ instead of their classes.\\
We recall that, by Ch. Benson and C.~S. Gordon's theorem \cite[Theorem A]{benson-gordon}, if a nilmanifold $X$ is endowed with a symplectic structure $\omega$ such that the Hard Lefschetz Condition holds, then it is diffeomorphic to a torus.

\medskip

Let $X=\left.\Gamma\right\backslash G$ be a completely-solvable solvmanifold, endowed with a $G$-left-invariant structure $\omega$. In particular, $\omega$ being $G$-left-invariant, $\left\langle L,\, \Lambda,\, H\right\rangle$ induces a $\mathfrak{sl}(2;\R)$-representation both on $\wedge^\bullet X$ and on its (quasi-isomorphic) subspace made of the $G$-left-invariant forms (which is isomorphic to $\wedge^\bullet\duale{\mathfrak{g}}$). For any $r,s\in\N$, we can consider both the subgroup $H^{(r,s)}_\omega(X;\R)$ of $H^\bullet_{dR}(X;\R)$, and the subgroup
$$ H^{(r,s)}_\omega(\mathfrak{g};\R) \;:=\; \left\{ \left[L^r\,\beta^{(s)}\right]\in H^{\bullet}\left(\mathfrak{g};\R\right) \st \Lambda \beta^{(s)}=0 \right\} $$
of $H^{\bullet}\left(\mathfrak{g};\R\right) \simeq H^\bullet_{dR}(X;\R)$, namely, the subgroup made of the de Rham cohomology classes admitting $G$-left-invariant representatives in $L^r \, \Prim^sX$.\\
In this section, we are concerned in studying the linking between $H^{(\bullet,\bullet)}_\omega(X;\R)$ and $H^{(\bullet,\bullet)}_\omega(\mathfrak{g};\R)$. This will let us study explicit examples.

\medskip

First of all, we will need the following lemma by J. Milnor.

\begin{lemma}[{\cite[Lemma 6.2]{milnor}}]\label{lemma:milnor}
 Any connected Lie group that admits a discrete subgroup with compact quotient is unimodular and in particular admits a bi-invariant volume form $\eta$.
\end{lemma}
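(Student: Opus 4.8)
The plan is to prove that $G$ is \emph{unimodular}, from which the existence of a bi-invariant volume form follows at once. Recall that the \emph{modular function} $\delta\colon G\to\R_{>0}$ is the continuous homomorphism characterized by $R_g^*\nu=\delta(g)\,\nu$, where $\nu$ is any left-invariant volume form on $G$ and $R_g$ denotes right-translation by $g$; concretely $\delta(g)=\modulo{\det\mathrm{Ad}(g)}$. By definition $G$ is unimodular exactly when $\delta\equiv 1$, and in that case every left-invariant volume form $\nu$ is simultaneously right-invariant, hence bi-invariant — this furnishes the desired $\eta$. So the whole statement reduces to showing $\delta\equiv 1$.

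The crucial ingredient is the compactness of the quotient. First I would fix a left-invariant volume form $\nu$ on $G$; since $\nu$ is in particular invariant under the left-action of the discrete subgroup $\Gamma\subset G$, it descends to a nowhere-vanishing top-form $\bar\nu$ on the compact manifold $\left.\Gamma\right\backslash G$, with finite positive total volume $V:=\int_{\Gamma\backslash G}\bar\nu\in(0,+\infty)$. Next, for each $g\in G$ the right-translation $R_g$ commutes with all left-translations, hence with the $\Gamma$-action (one has $L_\gamma\circ R_g=R_g\circ L_\gamma$ for $\gamma\in\Gamma$), and therefore descends to a self-diffeomorphism $\bar R_g$ of $\left.\Gamma\right\backslash G$; moreover $\bar R_g^*\bar\nu=\delta(g)\,\bar\nu$ with $\delta(g)>0$, so $\bar R_g$ preserves the orientation induced by $\bar\nu$. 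Using that the integral of a top-form over a closed oriented manifold is invariant under pullback by an orientation-preserving diffeomorphism, I would then compute
$$ V \;=\; \int_{\Gamma\backslash G}\bar\nu \;=\; \int_{\Gamma\backslash G}\bar R_g^*\bar\nu \;=\; \delta(g)\int_{\Gamma\backslash G}\bar\nu \;=\; \delta(g)\,V \;. $$
Since $0<V<+\infty$, this forces $\delta(g)=1$ for every $g\in G$, establishing unimodularity and hence the lemma.

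The only point requiring genuine care — the main obstacle — is verifying that both operations legitimately pass to the quotient: that $\nu$, being $G$-left-invariant and thus $\Gamma$-invariant, descends to a well-defined volume form $\bar\nu$ (which also shows $\left.\Gamma\right\backslash G$ is orientable), and that $R_g$ descends to a well-defined diffeomorphism $\bar R_g$, which hinges precisely on the commutativity of left- and right-translations. The various orientation and sign conventions in the identity $R_g^*\nu=\delta(g)\,\nu$ are harmlessly absorbed by working throughout with $\delta(g)=\modulo{\det\mathrm{Ad}(g)}>0$. Finally, at the infinitesimal level unimodularity is equivalent to $\mathrm{tr}\left(\mathrm{ad}\,X\right)=0$ for every $X\in\mathfrak{g}$, which is the form in which the lemma is typically applied to the structure equations of a solvmanifold.
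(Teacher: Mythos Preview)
Your argument is correct and is essentially Milnor's original proof. Note, however, that the paper does not supply its own proof of this lemma: it is merely quoted from \cite[Lemma 6.2]{milnor} and used as a black box, so there is nothing in the paper to compare against beyond the citation itself.
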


In the following lemma, we recall \emph{F.~A. Belgun's symmetrization trick}, see \cite[Theorem 7]{belgun} and \cite[Theorem 2.1]{fino-grantcharov}.

\begin{lemma}\label{lemma:fino-grantcharov}
Let $X=\left.\Gamma\right\backslash G$ be a solvmanifold and call $\mathfrak{g}$ the Lie algebra naturally associated to the connected simply-connected Lie group $G$. Let $\omega$ be a $G$-left-invariant symplectic structure on $X$.
Let $\eta$ be the $G$-bi-invariant volume form on $G$ given by J. Milnor's Lemma \ref{lemma:milnor} and such that $\int_X\eta=1$. (Up to identifying $G$-left-invariant forms with linear forms over $\duale{\mathfrak{g}}$ through left-translations,) define the map
$$ \mu\colon \wedge^\bullet X \to \wedge^\bullet \duale{\mathfrak{g}}\;,\qquad \mu(\alpha)\;:=\;\int_X \alpha\lfloor_m \, \eta(m) \;.$$
One has that
$$ \mu\lfloor_{\wedge^\bullet \duale{\mathfrak{g}}}\;=\;\id\lfloor_{\wedge^\bullet \duale{\mathfrak{g}}} $$
and that
$$  \sharp\left(\mu(\sspace)\right) \;=\; \mu\left(\sharp\sspace\right) \;, \qquad \text{ for } \qquad \sharp\in\left\{\de,\, L\right\} \;.$$
In particular, $\mu$ sends primitive forms to $G$-left-invariant primitive forms.
\end{lemma}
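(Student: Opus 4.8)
The plan is to unwind the definition of $\mu$ and verify the three assertions separately, the only delicate point being compatibility with $\de$. Writing $L_m\colon G\to G$ for left-translation by $m$, the value $\alpha\lfloor_m\in\wedge^\bullet\duale{\mathfrak{g}}$ is the pullback $\left(L_m^*\alpha\right)_e$ of (the $\Gamma$-invariant lift of) $\alpha$ to the identity, so that $\mu(\alpha)=\int_X\left(L_m^*\alpha\right)_e\,\eta(m)$ is a single element of $\wedge^\bullet\duale{\mathfrak{g}}$, i.e.\ a $G$-left-invariant form. The first assertion $\mu\lfloor_{\wedge^\bullet\duale{\mathfrak{g}}}=\id$ is then immediate: if $\alpha$ is already $G$-left-invariant then $L_m^*\alpha=\alpha$ for every $m$, whence $\mu(\alpha)=\alpha_e\int_X\eta=\alpha_e$, using the normalization $\int_X\eta=1$.

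For the compatibility with $L$, I would use that $\omega$ is $G$-left-invariant, so $L_m^*\left(\omega\wedge\alpha\right)=\omega\wedge L_m^*\alpha$ and the factor $\omega_e$, being independent of $m$, comes out of the integral; hence $\mu(L\alpha)=\omega_e\wedge\int_X\left(L_m^*\alpha\right)_e\,\eta(m)=L\,\mu(\alpha)$.

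The heart of the matter is $\mu\circ\de=\de\circ\mu$. Here I would evaluate both sides on a basis $X_1,\dots,X_{k+1}$ of left-invariant vector fields. Since $X_i|_m=\left(L_m\right)_*X_i|_e$, one has $\left(L_m^*\alpha\right)_e\left(X_1,\dots\right)=\alpha\left(X_1,\dots\right)\lfloor_m$, so that $\mu(\alpha)\left(X_1,\dots\right)$ is just the $\eta$-average over $X$ of the function $\alpha\left(X_1,\dots\right)$. Because $\mu(\alpha)$ is invariant, its differential is computed by the Chevalley--Eilenberg formula, in which only the bracket terms survive; comparing with the full de Rham formula for $\de\alpha$ evaluated on the $X_i$, the bracket terms match and the difference $\mu(\de\alpha)-\de\mu(\alpha)$ collapses to a sum of integrals of the form $\int_X\left(X_i f\right)\,\eta$, where $f$ is a smooth function on $X$. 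The main obstacle is to show each such integral vanishes: this is exactly where bi-invariance of $\eta$ is essential. A left-invariant vector field generates the flow of right-translations, and right-invariance of $\eta$ gives $\mathcal{L}_{X_i}\eta=0$; therefore $\mathcal{L}_{X_i}\left(f\,\eta\right)=\left(X_i f\right)\eta$, and Stokes' theorem on the closed manifold $X$ yields $\int_X\left(X_i f\right)\eta=\int_X\de\,\iota_{X_i}\left(f\,\eta\right)=0$. This forces $\mu(\de\alpha)=\de\mu(\alpha)$.

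Finally, the last assertion follows formally. Since $\mu(\alpha)$ is $G$-left-invariant by construction, I only need to check primitivity. Using the characterization $\Prim^kX=\ker L^{n-k+1}\lfloor_{\wedge^kX}$ recalled in the preliminaries, together with the already-established identity $\mu\circ L=L\circ\mu$ (iterated), if $\alpha\in\Prim^kX$ then $L^{n-k+1}\mu(\alpha)=\mu\left(L^{n-k+1}\alpha\right)=0$, so $\mu(\alpha)$ is a $G$-left-invariant primitive form, as claimed.
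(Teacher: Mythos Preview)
Your argument is correct and self-contained. The paper's own proof is much shorter: it explicitly verifies only the commutation $\mu\circ L=L\circ\mu$, by exactly the same computation you give (using that $\omega$ is $G$-left-invariant to pull it out of the integral), and tacitly relies on the references to Belgun and Fino--Grantcharov for the facts that $\mu$ is the identity on invariant forms and commutes with $\de$. So on the one new point the approaches coincide; the difference is that you supply a full proof of $\mu\circ\de=\de\circ\mu$ via the Cartan formula on left-invariant frames and the vanishing of $\int_X(X_i f)\,\eta$ from bi-invariance, whereas the paper treats this as known. Your derivation of primitivity via $\Prim^kX=\ker L^{n-k+1}$ is also a clean way to deduce the final clause directly from the $L$-commutation.
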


\begin{proof}
 It has to be shown just that $\mu\left(L\,\alpha\right)=L\,\mu\left(\alpha\right)$ for every $\alpha\in\wedge^\bullet X$. Note that, being $\omega$ a $G$-left-invariant form, one has $\mu\left(L\,\alpha\right)=\int_X \left(\omega\wedge\alpha\right)\lfloor_m \, \eta(m) =\int_X \omega\lfloor_m \wedge \alpha\lfloor_m \, \eta(m) = \omega \wedge \int_X \alpha\lfloor_m \, \eta(m) = L\,\mu\left(\alpha\right)$.
\end{proof}

Then we can prove the following result, which relates the subgroups $H^{(r,s)}_\omega(X;\R)$ with their invariant part $H^{(r,s)}_\omega(\mathfrak{g};\R)$ (compare it with \cite[Proposition 2.4]{angella-rossi} for almost-$\mathbf{D}$-complex structures in the sense of F.~R. Harvey and H.~B. Lawson, and also with \cite[Theorem 3.4]{fino-tomassini} for almost-complex structures).

\begin{prop}\label{prop:linear-cpf-invariant-cpf}
Let $X=\left.\Gamma\right\backslash G$ be a solvmanifold endowed with a $G$-left-invariant symplectic structure $\omega$. Call $\mathfrak{g}$ the Lie algebra naturally associated to the connected simply-connected Lie group $G$.
For every $r,s\in\N$, the map
$$ j\colon H^{(r,s)}_\omega(\mathfrak{g};\R) \to H^{(r,s)}_\omega(X;\R) $$
induced by left-translations is injective, and, if $H^\bullet\left(\mathfrak{g};\R\right) \simeq H^\bullet_{dR}(X;\R)$ (for instance, if $X$ is a completely-solvable solvmanifold), then it is in fact an isomorphism.
\end{prop}

\begin{proof}
Left-translations induce the map $j\colon H^{(r,s)}_\omega(\mathfrak{g};\R) \to H^{(r,s)}_\omega(X;\R)$.
Consider the F.~A. Belgun's symmetrization map $\mu\colon\wedge^\bullet X \to \wedge^\bullet \duale{\mathfrak{g}}$: by Lemma \ref{lemma:fino-grantcharov}, since it commutes with $\de$, it induces the map $\mu\colon H^\bullet_{dR}(X;\R) \to H^\bullet\left(\mathfrak{g};\R\right)$, and, since it commutes with $L$ and $\Lambda$, it induces the map $\mu\colon H^{(r,s)}_\omega(X;\R) \to H^{(r,s)}_\omega(\mathfrak{g};\R)$. Moreover, since $\mu$ is the identity on the space of $G$-left-invariant forms, we get the commutative diagram
$$
\xymatrix{
H^{(r,s)}_{\omega}(\mathfrak{g};\R) \ar[r]^{j} \ar@/_1.5pc/[rr]_{\id} & H^{(r,s)}_{\omega}(X;\R) \ar[r]^{\mu} & H^{(r,s)}_{\omega}(\mathfrak{g};\R)
}
$$
hence $j\colon H^{(r,s)}_\omega(\mathfrak{g};\R) \to H^{(r,s)}_\omega(X;\R)$ is injective, and $\mu\colon H^{(r,s)}_\omega(X;\R) \to H^{(r,s)}_\omega(\mathfrak{g};\R)$ is surjective.

Furthermore, when $H^\bullet\left(\mathfrak{g};\R\right) \simeq H^\bullet_{dR}(X;\R)$ (for instance, when $X$ is a completely-solvable solvmanifold, by Hattori's theorem \cite[Theorem 4.2]{hattori}), since $\mu\lfloor_{\wedge^\bullet\duale{\mathfrak{g}}} = \id\lfloor_{\wedge^\bullet\duale{\mathfrak{g}}}$, we get that $\mu\colon H^\bullet_{dR}(X;\R) \to H^\bullet\left(\mathfrak{g};\R\right)$ is the identity map, and hence $\mu\colon H^{(r,s)}_\omega(X;\R) \to H^{(r,s)}_\omega(\mathfrak{g};\R)$ is also injective, hence an isomorphism.
\end{proof}

\medskip

Proposition \ref{prop:linear-cpf-invariant-cpf} is a useful tool to study explicit examples.

\begin{ex}
Take the $6$-dimensional nilmanifold
$$ X \;:=\; \left(0^3,\; 12,\; 14-23,\; 15+34 \right) $$
endowed with the left-invariant symplectic structure
$$ \omega \;:=\; e^{16}+e^{35}+e^{24} \;.$$

By Nomizu's theorem \cite[Theorem 1]{nomizu}, one computes
\begin{eqnarray*}
H^1_{dR}(X;\R) &=& \underbrace{\R\left\langle e^1,\; e^2,\; e^3\right\rangle}_{=H^{(0,1)}_\omega(X;\R)} \;,\\[5pt]
H^2_{dR}(X;\R) &=& \underbrace{\R\left\langle e^{16}+e^{35}+e^{24} \right\rangle}_{=H^{(1,0)}_\omega(X;\R)} \oplus \underbrace{\R\left\langle e^{13},\; e^{14}+e^{23},\; 2\cdot e^{24}-e^{16}-e^{35}\right\rangle}_{=H^{(0,2)}_\omega(X;\R)} \;, \\[5pt]
H^3_{dR}(X;\R) &=& \R\left\langle e^{126}-e^{145}-2\cdot e^{235},\; e^{136},\; e^{146}+\frac{1}{2}\cdot e^{236}+\frac{1}{2}\cdot e^{345},\; e^{245}\right\rangle \;. \\[5pt]
\end{eqnarray*}

Since the Lefschetz decompositions of the $g$-harmonic representatives of $H^3_{dR}(X;\R)$ are
\begin{eqnarray*}
 e^{126}-e^{145}-2\cdot e^{235} &=& \underbrace{\left(-\frac{1}{2}\cdot e^{126}-\frac{1}{2}\cdot e^{235}-e^{145}\right)}_{\in \Prim^3X} + \underbrace{\left(\frac{3}{2}\cdot e^{126}-\frac{3}{2}\cdot e^{235}\right)}_{=\; L\left(-\frac{3}{2}\cdot e^2\right)} \;, \\[5pt]
 e^{136} &=& \underbrace{\left(\frac{1}{2}\cdot e^{136}-\frac{1}{2}\cdot e^{234}\right)}_{\in \Prim^3X} + \underbrace{\left(\frac{1}{2}\cdot e^{136}+\frac{1}{2}\cdot e^{234}\right)}_{=\; L\left(-\frac{1}{2}\cdot e^3\right)} \;, \\[5pt]
 e^{146}+\frac{1}{2}\cdot e^{236}+\frac{1}{2}\cdot e^{345} &=& \underbrace{\left(\frac{1}{4}\cdot e^{146}-\frac{1}{4}\cdot e^{345}+\frac{1}{2}\cdot e^{236}\right)}_{\in \Prim^3X} + \underbrace{\left(\frac{3}{4}\cdot e^{146}+\frac{3}{4}\cdot e^{345}\right)}_{=\; L\left(-\frac{3}{4}\cdot e^4\right)} \;, \\[5pt]
 e^{245} &=& \underbrace{\left(\frac{1}{2}\cdot e^{156}+\frac{1}{2}\cdot e^{245}\right)}_{\in \Prim^3X} + \underbrace{\left(-\frac{1}{2}\cdot e^{156}+\frac{1}{2}\cdot e^{245}\right)}_{=\; L\left(\frac{1}{2}\cdot e^5\right)} \;,
\end{eqnarray*}
and since
$$ \de\wedge^2\duale{\mathfrak{g}} \;=\; \R\left\langle e^{123},\, e^{124},\; e^{125},\; e^{126}+e^{145},\; e^{134},\; e^{135},\; e^{146}-e^{236}-e^{345},\; e^{234} \right\rangle \;, $$
we get that
\begin{eqnarray*}
 \left[e^{126}-e^{145}-2\cdot e^{235}\right] &=& \left[e^{126}-e^{145}-2\cdot e^{235}+\de e^{46}\right] \\[5pt]
 &=& \left[2\cdot e^{126}-2\cdot e^{235}\right] \;=\; \left[L \left(-2\cdot e^2\right)\right] \;\in\; H^{(1,1)}_\omega(X;\R)
\end{eqnarray*}
and
\begin{eqnarray*}
 \left[e^{136}\right] &=& \left[e^{136}+\de\left(\frac{1}{2}\cdot e^{45}-\frac{1}{2}\cdot e^{26}\right)\right] \;=\; \left[e^{136} + e^{234}\right] \;=\; \left[L\left(-e^3\right)\right] \;\in\; H^{(1,1)}_\omega(X;\R) \;, \\[5pt]
 \left[e^{136}\right] &=& \left[e^{136}-\de\left(\frac{1}{2}\cdot e^{45}-\frac{1}{2}\cdot e^{26}\right)\right] \;=\; \left[e^{136} - e^{234}\right] \;\in\; H^{(0,3)}_\omega(X;\R) \;,
\end{eqnarray*}
while it is straightforward to check that
$$ \R\left\langle e^{146}+\frac{1}{2}\cdot e^{236}+\frac{1}{2}\cdot e^{345},\; e^{245}\right\rangle \cap \left(H^{(0,3)}_\omega(X;\R)+H^{(1,1)}_\omega(X;\R)\right) \;=\; \left\{0\right\} \;; $$
in particular, $H^{(0,3)}_\omega(X;\R)+H^{(1,1)}_\omega(X;\R)\subsetneq H^3_{dR}(X;\R)$ and $H^{(0,3)}_\omega(X;\R)\cap H^{(1,1)}_\omega(X;\R)\neq\{0\}$.
\end{ex}

\begin{ex}
Take the $6$-dimensional solvable Lie algebra
$$ \mathfrak{g}_{3.4}^{-1}\oplus\mathfrak{g}_{3.5}^0 \;:=\; \left(-13,\; 23,\; 0,\; -56,\; 46,\; 0 \right) $$
endowed with the linear symplectic structure
$$ \omega \;:=\; e^{12}+e^{36}+e^{45} \;.$$
The corresponding connected simply-connected Lie group admits a compact quotient, whose de Rham cohomology is the same as the cohomology of $\left(\wedge^\bullet\duale{\left(\mathfrak{g}_{3.4}^{-1}\oplus\mathfrak{g}_{3.5}^0\right)},\, \de\right)$, see \cite[Table 5]{bock}.

It is straightforward to compute
\begin{eqnarray*}
H_{dR}^1\left(X;\R\right) &=& \underbrace{\R\left\langle e^3,\; e^6 \right\rangle}_{=\; H^{(0,1)}_\omega\left(X;\R\right)} \;, \\[5pt]
H_{dR}^2\left(X;\R\right) &=& \underbrace{\R\left\langle e^{12}+e^{36}+e^{45} \right\rangle}_{=\; H^{(1,0)}_\omega\left(X;\R\right)} \oplus \underbrace{\R\left\langle e^{12}-e^{36},\; e^{12}-e^{45} \right\rangle}_{=\; H^{(0,2)}_\omega\left(X;\R\right)} \;, \\[5pt]
H_{dR}^3\left(X;\R\right) &=& \underbrace{\R\left\langle e^{123}+e^{345},\; e^{126}+e^{456} \right\rangle}_{=\; H^{(1,0)}_\omega\left(X;\R\right) \;=\; L\,H^{(0,1)}_\omega\left(X;\R\right)} \oplus \underbrace{\R\left\langle e^{123}-e^{345},\; e^{126}-e^{456} \right\rangle}_{=\; H^{(0,3)}_\omega\left(X;\R\right)}  \;, \\[5pt]
H_{dR}^4\left(X;\R\right) &=& \underbrace{\R\left\langle e^{1236}+e^{1245}+e^{3456} \right\rangle}_{=\; H^{(2,0)}_\omega\left(X;\R\right)} \oplus \underbrace{\R\left\langle e^{1236}-e^{1245},\; e^{1236}-e^{3456} \right\rangle}_{=\; H^{(1,2)}_\omega\left(X;\R\right) \;=\; L\, H^{(0,2)}_\omega\left(X;\R\right)} \;, \\[5pt]
H_{dR}^5\left(X;\R\right) &=& \underbrace{\R\left\langle e^{12456},\; e^{12345} \right\rangle}_{=\; H^{(2,1)}_\omega\left(X;\R\right) \;=\; L^2\, H^{(0,1)}_\omega\left(X;\R\right)} \;, \\[5pt]
\end{eqnarray*}
hence we get a decomposition
$$ H^\bullet\left(X;\R\right) \;=\; \bigoplus_{r\in\N} L^r\, H^{(0,\bullet-2r)}_\omega\left(X;\R\right) \;.$$
In particular, it follows that the Hard Lefschetz Condition holds on $\left(X,\, \omega\right)$.
\end{ex}

\begin{ex}
Take the $6$-dimensional completely-solvable solvmanifold
$$ X \;:=\; \left(-23,\; 0,\; 0,\; -46,\; 56,\; 0 \right) $$
endowed with the linear symplectic structure
$$ \omega \;:=\; e^{12}+e^{36}+e^{45} \;.$$

By Hattori's theorem \cite[Corollary 4.2]{hattori}, one computes
\begin{eqnarray*}
H^1_{dR}(X;\R) &=& \underbrace{\R\left\langle e^2,\; e^3,\; e^6\right\rangle}_{=H^{(0,1)}_\omega(X;\R)} \;,\\[5pt]
H^2_{dR}(X;\R) &=& \underbrace{\R\left\langle e^{12}+e^{36}+e^{45} \right\rangle}_{=H^{(1,0)}_\omega(X;\R)} \oplus \underbrace{\R\left\langle e^{12}-e^{36},\; e^{12}-e^{45},\; e^{13},\; e^{26} \right\rangle}_{=H^{(0,2)}_\omega(X;\R)} \;, \\[5pt]
H^3_{dR}(X;\R) &=& \R\left\langle e^{123},\; e^{126},\; e^{136},\; e^{245},\; e^{345},\; e^{456} \right\rangle \;. \\[5pt]
\end{eqnarray*}

Note that, being $e^{245}+\de e^{16}$ primitive,
$$ H^{(0,3)}_\omega\left(X;\R\right) \;\supseteq\; \R\left\langle e^{123}-e^{345},\; e^{126}-e^{456},\; e^{245} \right\rangle \;, $$
and, being $e^{245}-\de e^{16}=L\, e^2$,
$$ H^{(1,1)}_\omega\left(X;\R\right) \;=\; L\, H^{(0,3)}_\omega\left(X;\R\right) \;\supseteq \; \R\left\langle e^{123}+e^{345},\; e^{126}+e^{456},\; e^{245} \right\rangle \;, $$
while, being
$$ e^{136} \;=\; \underbrace{\frac{1}{2}\,\left(e^{136}+e^{145}\right)}_{\in L\,\Prim^1X} + \underbrace{\frac{1}{2}\,\left(e^{136}-e^{145}\right)}_{\in \Prim^3X} $$
and
$$ \de\wedge^2\duale{\mathfrak{g}} \;=\; \R \left\langle e^{146}-e^{234},\; e^{156}+e^{235},\; e^{236},\; e^{246},\; e^{256},\; e^{346},\; e^{356} \right\rangle \:, $$
one has
$$ \left\langle e^{136} \right\rangle \not\in H^{(0,3)}_\omega\left(X;\R\right) + H^{(1,1)}_\omega\left(X;\R\right) \;,$$
hence $H^{(0,3)}_\omega\left(X;\R\right) + H^{(1,1)}_\omega\left(X;\R\right) \subsetneq H^3_{dR}(X;\R)$.
\end{ex}

\medskip

The next example gives explicit examples of dual currents on a closed symplectic half-flat manifold.
\begin{ex}
Let $\C^3$ be endowed with the product $*$ defined by
$$
\left(w^1,\,
w^2,\,
w^3\right)*\,
\left(z^1,\,
z^2,\,
z^3\right)
=\,
\left(z^1+w^1,\,
\hbox{\rm e}^{w^1}z^2+w^2,\,
\hbox{\rm e}^{w^1}z^3+w^3\right)
$$
for every $\left(w^1, \, w^2, \, w^3\right),\, \left(z^1, \, z^2, \, z^3\right)\in \C^3$.
Then $\left(\C^3,\,*\right)$ is a complex solvable (non-nilpotent) Lie group and, according to \cite{Nak}, it admits lattice
$\Gamma\subset\C^3$, such that $X=\Gamma\backslash (\C^3,*)$ is a solvmanifold. Setting
$$
\varphi^1 \;:=\; \de z^1\;, \qquad \varphi^2 \;:=\; \hbox{\rm e}^{z^1}\de z^2\;, \qquad \varphi^3 \;:=\; \hbox{\rm e}^{-z^1}\de z^3\;,
$$
then $\left\{\varphi^1,\, \varphi^2,\, \varphi^3\right\}$ is a global complex co-frame on $X$ satisfying the following complex structure equations:
$$
\de\varphi^1 \;=\; 0\;,\quad \de\varphi^2 \;=\; \varphi^{12}\;,\quad \de\varphi^3\;=\;-\varphi^{13}\;.
$$
If we set $\varphi^j =: e^j+\im e^{3+j}$, for $j\in\{1,\, 2,\, 3\}$, then the last equations yield to
\begin{equation}\label{differenzialireali}
\left\{
\begin{array}{l}
\de e^{1} \;=\; \de e^{4} \;=\; 0\\[5pt]
\de e^{2} \;=\; e^{12}-e^{45}\\[5pt]
\de e^{3} \;=\; -e^{13}+e^{46}\\[5pt]
\de e^{5} \;=\; e^{15}-e^{24}\\[5pt]
\de e^{6} \;=\; -e^{16}+e^{34} \;.
\end{array}
\right.
\end{equation}
Then, (see \cite{dBT}),
$$
\omega \;:=\; e^{14}+e^{35}+e^{62}\;,
$$
and
$$
\begin{array}{lll}
Je^1 \;:=\; -e^{4}\;, & \quad Je^{3} \;:=\; -e^{5}\;, & \quad Je^{6} \;:=\; -e^{2}\;,\\[5pt]
Je^4 \;:=\;  e^{1}\;, & \quad Je^{5} \;:=\;  e^{3}\;, & \quad Je^{2} \;:=\; e^{6}\;,
\end{array}
$$
and
$$
\psi \;:=\; \left(e^1+\im e^4\right) \wedge \left(e^3+\im e^5\right) \wedge \left(e^6+\im e^2\right)
$$
give rise to a symplectic half-flat structure on $X$, where 
$$
\Re\mathfrak{e}\,\psi \;=\; e^{136} + e^{125} + e^{234} - e^{456} \;.
$$
Note that the Hard Lefschetz Condition holds on $\left(X,\, \omega\right)$, see \cite[Theorem 5.1]{dBT}.\\
Then, setting $z^j=:x^j+\im y^j$, for $j\in\{1,\, 2,\, 3\}$, and denoting by $\pi\colon \C^3\to X$ the canonical projection, we easily check that 
$$
\begin{array}{l}
Y_1 \;:=\; \pi\left(\left\{\left(x^1,\, x^2,\, x^3,\, y^1,\, y^2,\, y^3\right) \in \C^3 \st x^2=y^4=y^5=0\right\}\right) \;,\\[5pt]
Y_2 \;:=\; \pi\left(\left\{\left(x^1,\, x^2,\, x^3,\, y^1,\, y^2,\, y^3\right) \in \C^3 \st x^3=y^4=y^6=0\right\}\right)
\end{array}
$$
are special Lagrangian submanifolds of $\left(X,\, \omega,\, \psi\right)$, namely, for $j\in\{1,\, 2\}$, it holds $\Re\mathfrak{e}\,\psi\lfloor_{{Y_j}}=\Vol_{Y_j}$, and, consequently, the associated dual currents $\rho_{Y_j}$ are primitive.
\end{ex}

\end{document}